\definecolor{myBlue}{rgb}{0.0,0.0,0.55}
\definecolor{green}{rgb}{0.0,0.7,0.2}
  \newcounter{mnote}
  \let\oldmarginpar\marginpar
    \renewcommand\marginpar[1]{\-\oldmarginpar[\raggedleft\footnotesize #1]%
    {\raggedright\footnotesize #1}}
\newtheorem{theorem}{Theorem}[section]
\newtheorem{lemma}[theorem]{Lemma}
\newtheorem{corollary}[theorem]{Corollary}
\newtheorem{example}[theorem]{Example}
\newtheorem{remark}[theorem]{Remark}
\newtheorem{alg}[theorem]{Algorithm}
\newcommand{\dd}{\,{\rm d}}
\newcommand{\bs}{\boldsymbol}
\newcommand{\csch}{{\rm csch}}
\newcommand{\vertiii}[1]{{\left\vert\kern-0.25ex\left\vert\kern-0.25ex\left\vert #1 
    \right\vert\kern-0.25ex\right\vert\kern-0.25ex\right\vert}}
\newcommand{\curl}{{\rm curl\,}}
\renewcommand{\div}{\operatorname{div}}
\newcommand{\grad}{{\rm grad\,}}
\begin{document}
\title[MFEM for FHD model]{Mixed finite element methods for the ferrofluid model with magnetization paralleled to the magnetic field}
\author{Yongke Wu \and Xiaoping Xie}
\date{\today}
\thanks{Y. Wu was supported by the National Natural Science Foundation of China (11971094 and 11501088), and X. Xie was supported by the National Nature Science Foundation of China (12171340)}
\thanks{Y. Wu, School of Mathematical Sciences, University of Electronic Science and Technology of China, Chengdu 611731, China. Email: wuyongke1982@sina.com}
\thanks{X. Xie (Corresponding author), School of Mathematical, Sichuan University,  Chengdu 610064, China. Email: xpxie@scu.edu.cn}

%\email{wuyongke1982@sina.com}
%

\subjclass[2010]{
65N55;   %% Multigrid methods; domain decomposition
65F10;   %%  Iterative methods for linear systems
65N22;   %%  Solution of discretized equations
65N30;   %%  Finite elements, Rayleigh-Ritz and Galerkin methods, finite methods;
}

\begin{abstract}
Mixed finite element methods are considered for a ferrofluid flow model with magnetization paralleled to the magnetic field.  The  ferrofluid model is a coupled system of the Maxwell equations and the incompressible Navier-Stokes equations. 
By skillfully  introducing some new variables, the   model is  rewritten as  several  decoupled subsystems that can be   solved independently. 
%i.e, a nonlinear elliptic equation and the Navier-Stokes equations. 
Mixed finite element formulations are given  to discretize the decoupled systems  with   proper finite element spaces. Existence and uniqueness of the mixed finite element solutions are shown, and   optimal order error estimates  are obtained under some reasonable assumptions.  Numerical experiments confirm the theoretical results.  
\end{abstract}

\keywords{ferrofluid flow, decoupled system, mixed finite element method, error estimate}
\maketitle

\section{Introduction}
Ferrofluids are colloidal liquids consisting of non-conductive nanoscale ferromagnetic or ferrimagnetic particles suspended in carrier fluids, and have  extensive applications  in  many technology  and  biomedicine fields ~\cite{Pankhurst2003, Zahn2001}. % and  % like instrumentation, vacuum technology, vibration damping and acoustics, and are expected to apply to some biomedical fields~\cite{Pankhurst2003} like magnetic separation, drugs or radioisotopes targeted by magnetic guidance, hyperthermia treatments, and magnetic resonance imaging contrast enhancement.
There are two main ferrohydrodynamics (FHD) models which treat ferrofluids as homogeneous monophase fluids:   the Rosensweig's model ~\cite{Rosensweig1985,Rosensweig1987} and the  Shliomis' model~\cite{Shliomis1972Effective,Shliomis2002Ferrofluids}.  The main difference between these two  models lies in  that the former one considers the internal rotation of the nanoparticles, while the latter  deals with the rotation as a magnetic torque. We refer  to \cite{Amirat2008,Amirat2008GlobalR,Amirat2009Strong,Amirat2010UniqueR,Nochetto2019} for some  existence results of solutions to the two FHD models.

The  FHD models are coupled nonlinear systems of  the  Maxwell equations and the incompressible Navier-Stokes equations. 
%Since the FHD models are coupled nonlinear partial differential equations, the analytical solutions of the FHD models seem difficult to obtain, and the only feasible way to solve them is to seek approximation solutions by applying numerical methods. 
There are limited works in   the literature on the numerical analysis  of the FHD models. In~\cite{Snyder2003Finite,Lavrova2006,Knobloch2010Numerical,Yoshikawa2010Numerical}, several numerical schemes were applied to solve  reduced  two dimensional  FHD models   where some nonlinear terms of the original models are dropped. 
  Nochetto et al.  \cite{Nochetto2015The} showed the energy stability of the  Rosensweig's   model, proposed an energy-stable numerical scheme using finite element methods, and gave the existence and  convergence of the numerical solutions.  Recently, Wu and Xie  \cite{YW2022} developed   a class of   energy-preserving mixed finite element methods for the Shliomis' FHD model,  and derived optimal error estimates for both the the
semi- and fully discrete schemes.
 We also note that in \cite{Zhang2021}   an unconditionally energy-stable fully discrete finite element method was presented for a two-phase FHD model. 

%In~\cite{Snyder2003Finite,Lavrova2006,Knobloch2010Numerical,Yoshikawa2010Numerical}, the authors developed numerical schemes to solve the FHD model with free surface, they main consider the two dimensional case, and do not gave the mathematical analysis. In \cite{Nochetto2015The}, the authors developed mixed finite element for Rosensweig's FHD model, the showed that the FHD model was formly energy stable, and devised a numerical scheme that mimicked the same stability estimate. Under some assumption on the exact solutions, they obtained that the numerical solutions was convergent to the exact solution. However, they did not give the optimal order error estimates.  

In this paper, we consider the Shliomis' FHD model with the assumption that the magnetization field is parallel to the magnetic field. Under this assumption, the magnetization equation in the Shliomis'   model degenerates to the Langevin magnetization law~\cite{Lavrova2006,Rosensweig1985,Rosensweig1987}.  We introduce some new variables to transform the  model   into  two main decoupled subsystems, i.e., a nonlinear elliptic equation and  the incompressible Navier-Stokes equations. We apply proper finite element spaces to discretize the nonlinear decoupled systems,  prove the existence and, under some reasonable assumptions,   uniqueness of the finite element solutions, and derive optimal error estimates. We also show that our scheme preserves the ferrofluids' nonconductive property  $\curl\bs H = 0$ exactly.

The rest of this paper is organized as follows. In section 2, we introduce several Sobolev spaces, give the governing equation of the FHD model with magnetization paralleled to magnetic field, reform the FHD model equivalently, and construct the weak formulations. In section 3, we recall the finite element spaces, show the existence and uniqueness of solutions for the constructed finite element methods, and give the optimal order error estimates. In section 4, some numerical experiments will be given to verify our theoretical results. 

\section{Preliminary}
In this section, we introduce several Sobolev spaces, give the governing equations of the FHD model with magnetization paralleled to magnetic field, derive  the equivalent decoupled systems, and present the weak formulations. 
\subsection{Sobolev spaces}
Let $\Omega \subset\mathbb R^d$ ($d = 2,3$) be a bounded and simply connected convex domain with Lipschitz boundary $\partial\Omega$, and $\bs n$ be the unit outward normal vector on $\partial\Omega$.

For any $p \geq 1$, we denote by $L^p(\Omega)$ the space of all power-$p$ integrable functions on $\Omega$ with norm $\|\cdot\|_{L^p}$. For any nonnegative integer $m$,   denote by $H^m(\Omega)$ the usual $m$-th order Sobolev space with norm $\|\cdot\|_m$  and semi-norm $|\cdot|_m$. In particular, $H^0(\Omega) = L^2(\Omega)$ denotes the space of square integrable functions on $\Omega$, with the inner product $(\cdot,\cdot)$ and norm $\|\cdot\|$. For the vector spaces $\bs H^m(\Omega):=(H^m(\Omega))^d$ and $\bs L^2(\Omega) := (L^2(\Omega))^d$, we use the same notations of norm, semi-norm and inner product as those for the scalar cases. 
%For any differential operators $\dd = \grad$, $\curl$ or $\div$, we use $H(\dd,\Omega)$ to denote the Sobolev space $H(\dd,\Omega) = \{ v \in L^2(\Omega):~~\dd v \in L^2(\Omega)$. When $\dd = \grad$, the Sobolev space $H(\grad,\Omega)$ is the standard Sobolev space $H^1(\Omega)$. If there is no confuse occur, we will suppress the notation. 

We further introduce the Sobolev spaces
$$
\bs H(\curl): = \left\{
\begin{array}{ll}
\{\bs v \in (L^2(\Omega))^2:~~\curl\bs v \in L^2(\Omega)\} & \text{if } d = 2,\\
 \{\bs v \in (L^2(\Omega))^3:~~\curl\bs v \in (L^2(\Omega))^3\} & \text{if } d = 3
  \end{array}
\right.
$$
and
$$
\bs H(\div) := \{\bs v \in (L^2(\Omega))^d:~~\div\bs v \in L^2(\Omega)\},
$$
and set
\begin{align*}
&H_0^1(\Omega) := \{ v \in H^1(\Omega):~v = 0\text{ on }\partial\Omega\},\\
&\bs H_0(\curl) := \{\bs v \in \bs H(\curl):~\bs v\times \bs n = 0\text{ on }\partial\Omega\},\\
&\bs H_0(\div) := \{\bs v \in \bs H(\div):~\bs v \cdot \bs n = 0\text{ on }\partial\Omega\}, \\
& L_0^2(\Omega) := \{ v\in L^2(\Omega):~\int_\Omega v\dd \Omega = 0\}, 
\end{align*}
where 
\begin{align*}
& \curl \bs v: = \left\{\begin{array}{ll}
 \partial_x v_2 - \partial_y v_1 & \text{if }\bs v = ( v_1,~ v_2)^\intercal, \\
 (\partial_y v_3 - \partial_z v_2,~\partial_z v_1 - \partial_x v_3,~\partial_x v_2 - \partial_y v_1)^\intercal & \text{if }\bs v = (v_1,~v_2,~v_3)^\intercal. 
 \end{array}
\right.
\\
 & \div \bs v := \left\{
 \begin{array}{ll}
 \partial_x v_1 + \partial_y v_2 & \text{if }\bs v = (v_1,v_2)^\intercal, \\
 \partial_x v_1 + \partial_y v_2 + \partial_z v_3 & \text{if } \bs v = (v_1,v_2,v_3)^\intercal,
 \end{array}
 \right.
\end{align*}  
and $(x,y)$ and $(x,y,z)$ are  the Cartesian coordinates in two and three dimensions, respectively.

For any Sobolev space $S$ with norm $\|\cdot\|_S$, we use $S^\prime$ to denote the dual space of $S$, and  $\langle\cdot,\cdot\rangle$ to denote the dual product between $S^\prime$ and $S$. For any $f \in S^\prime$, the operator norm of $f$ is defined as $\|f\|_{S^\prime} = \sup\limits_{0\neq v \in S}\frac{\langle f,v\rangle}{\|v\|_S}$.

\subsection{Governing equations of the ferrofluid flow}
We consider the domain $\Omega$ filled with ferrofluid flow. On the macroscopic level, the mathematical model for describing the interactions between magnetic fields and ferrofluids consists of the Maxwell's equations and the Navier-Stokes equations~\cite{Rosensweig1987,Rosensweig1985,Shliomis1972Effective,Shliomis2002Ferrofluids}. Since the ferrofluid flow is nonconductive, the corresponding  Maxwell's equations read as:
\begin{align}
\label{eq:maxwell1} & \curl\bs H  = 0\qquad\qquad\qquad\,\text{in}\quad\Omega,\\
\label{eq:maxwell2} & \div\bs B  = \div\bs H_e\qquad\qquad\text{in}\quad\Omega,
\end{align}
with the magnetic field $\bs H$ and the magnetic induction $\bs B$ satisfying the relation
\begin{equation}
\label{eq:rel-H-B} 
\bs B = \mu_{0} (\bs H + \bs M)\qquad\qquad\text{in}\quad\Omega,
\end{equation}
where $\mu_{0}>0$ is the permeability constant, $\bs M$ is the magnetization, and $\bs H_e$ is the known external magnetic field that satisfies $\bs H_e\cdot\bs n = 0$ on $\partial\Omega$. 

Under the assumption that the magnetization $\bs M$ of the ferrofluid flow is parallel to the magnetic field $\bs H$, it follows the nonlinear Langevin magnetization law~\cite{Rosensweig1987,Rosensweig1985,Lavrova2006}, i.e.,
\begin{equation}
\label{eq:langevin}
\bs M(\bs H) = M_{s}\left(\coth(\gamma H)- \frac{1}{\gamma H} \right)\frac{\bs H}{H},
\end{equation}
with the saturation magnetization $M_{s}>0$, the Lagevin parameter $\gamma= 3\chi_{0}/  M_{s}$, the initial susceptibility $\chi_{0}>0$, $H := |\bs H| = (\bs H \cdot\bs H)^{1/2} > 0$, and $\coth(x) = \frac{e^x+e^{-x}}{e^x-e^{-x}}$. 

The hydrodynamic properties of the ferrofluid flow are described by the incompressible    Navier-Stokes equations
\begin{align}
\label{eq:NS1} & \rho(\bs u\cdot \nabla)\bs u - \eta\Delta\bs u + \nabla p  = \bs f + \mu_{0} (\bs M\cdot\nabla) \bs H\qquad\quad\text{in}\quad\Omega,\\
\label{eq:NS2} & \div\bs u  = 0\,\,\,\,\,\quad\qquad\qquad\qquad\qquad\qquad\qquad\qquad\qquad\text{in}\quad\Omega,
\end{align}
where $\rho$ denotes the fluid density, $\bs u$   the velocity field of the flow, $\eta$  the dynamic viscosity, and $\bs f$   the  known volume force.

We consider the following homogenous boundary conditions for equation \eqref{eq:maxwell1}-\eqref{eq:NS2}: 
\begin{equation}
\label{eq:bd} \bs u = 0\quad\text{  and  }\quad \bs H \times \bs n = 0\qquad\text{on}\quad\partial\Omega.
\end{equation}
%where $\bs n$ is the unit outward norm on $\partial\Omega$.

Using the fact that
$$
(\bs H \cdot\nabla)\bs H = \frac{1}{2}\nabla H^2 - \bs H \times (\curl\bs H) = \frac{1}{2} \nabla H^2
$$
and the Langevin magnetization law \eqref{eq:langevin}, we have
\begin{equation}
\label{eq:Kevinforce}
(\bs M\cdot\nabla)\bs H = \frac{M_s}{H}\left( \coth(\gamma H) - \frac{1}{\gamma H} \right) \frac{1}{2}\nabla H^2 = \frac{M_s}{\gamma} \nabla \ln \frac{\sinh(\gamma H)}{H},
\end{equation}
% and equation \eqref{eq:maxwell1} imply that
%\begin{align*}
%(\bs M\cdot \nabla)\bs H & = \frac{M_{s}}{H}\left(\coth(\gamma H) - \frac{1}{\gamma H} \right)(\bs H\cdot\nabla)\bs H \\
%& = \frac{M_{s}}{H}\left(\coth(\gamma H) - \frac{1}{\gamma H} \right) \left(\frac{1}{2}\nabla(\bs H\cdot\bs H) - \bs H \times (\curl\bs H)  \right) \\
%& = \frac{M_{s}}{H}\left(\coth(\gamma H) - \frac{1}{\gamma H} \right) \frac{1}{2}\nabla H^{2}\\
%& =  M_{s}\left(\coth(\gamma H) - \frac{1}{\gamma H} \right) \nabla H \\
%& = \frac{M_{s}}{\gamma} \nabla \ln \frac{\sinh(\gamma H)}{H}.
%\end{align*}
where $\sinh(x) = \frac{e^x-e^{-x}}{2}$.  Denote 
\begin{equation}
\label{eq:beta-def}
\beta(x): = \frac{M_s}{\gamma} \ln\frac{\sinh(\gamma x)}{x},
\end{equation}
and introduce two variables
\begin{equation}\label{eq:psi}
\psi :=\beta(H) %= \frac{M_{s}}{\gamma} \ln \frac{\sinh(\gamma H)}{H}
\end{equation}
and
\begin{equation}\label{eq:p}
\tilde p: = p - \mu_0\psi,
\end{equation}
then equation \eqref{eq:NS1} becomes
\begin{equation}
\label{eq:NS1-n}
\rho(\bs u\cdot\nabla)\bs u - \eta\Delta\bs u + \nabla \tilde p = \bs f\qquad\qquad\qquad\text{in }\Omega.
\end{equation}
Equation \eqref{eq:maxwell1} and the %assumption on the domain $\Omega$, the 
boundary condition $\bs H \times \bs n|_{\partial\Omega} = 0$ in \eqref{eq:bd} imply  that (cf. \cite{Arnold;Falk;Winther2006}) there exists $\phi \in H_{0}^{1}(\Omega)$ with
\begin{equation}
\label{eq:Hphi} \bs H = \nabla\phi,
\end{equation}
then combining \eqref{eq:maxwell2}, \eqref{eq:rel-H-B}  and \eqref{eq:langevin} leads to 
\begin{equation}
\label{eq:H} \nabla\cdot\left(\alpha(|\nabla\phi|)\nabla\phi\right) = \frac{1}{\mu_0}\div\bs H_e =: g\qquad\text{in}~~\Omega,
\end{equation}
with 
\begin{equation}\label{eq:alpha-def}
\alpha(x) := 1+\frac{M_{s}}{x}\left(\coth(\gamma x) - \frac{1}{\gamma x}\right).
\end{equation}

The above equivalence transformation shows that the FHD model \eqref{eq:maxwell1} - \eqref{eq:NS2} with the boundary conditions  \eqref{eq:bd} can be equivalently written as follows: Find $\phi \in H_0^1(\Omega)$, $\bs H \in\bs H_0(\curl)$, $\bs M \in \bs H_0(\curl)$, $\bs u \in \bs H_0^1(\Omega)$, $\tilde p \in L_0^2(\Omega)$, $\psi \in L_0^2(\Omega)$, and $p \in L_0^2(\Omega)$ such that 
\begin{equation}
\label{eq:eq-sys}
\left\{
\begin{array}{ll}
\nabla\cdot(\alpha(|\nabla\phi|)\nabla\phi) = g & \text{in }\Omega,\\
\bs H - \nabla\phi = 0 & \text{in }\Omega, \\
\bs M = M_s\left( \coth(\gamma H) - \frac{1}{\gamma H} \right) \frac{\bs H}{H}=\big(\alpha(H)-1\big)\bs H & \text{in }\Omega, \\
\rho(\bs u\cdot\nabla)\bs u - \eta\Delta \bs u + \nabla\tilde p = \bs f & \text{in }\Omega,\\
\nabla\cdot\bs u = 0 & \text{in }\Omega,\\
\psi =\frac{M_s}{\gamma}\ln\frac{\sinh(\gamma H)}{H} = \beta(H) & \text{in }\Omega, \\
p = \tilde p + \mu_0\psi & \text{in }\Omega.
\end{array}
\right.
\end{equation}
\begin{remark}\label{rem2.1}
The system \eqref{eq:eq-sys} is a decoupled system. Firstly, we can solve the first nonlinear elliptic equation of \eqref{eq:eq-sys} to get $\phi$, and solve the Navier-Stokes equations, i.e. the fourth and fifth equations, to get $\bs u$ and $\tilde p$. Secondly, we can get $\bs H$ from the second equation of \eqref{eq:eq-sys}. Finally, we can obtain $\bs M$, $\psi$, and $p$ from the third, the sixth, and the seventh equations,
% of \eqref{eq:eq-sys}, 
respectively. 
\end{remark}

\subsection{ Preliminary estimates for nonlinear functions}

Notice that the decoupled system  \eqref{eq:eq-sys} involves the nonlinear functions $\alpha(x)$ and $\beta(x)$ defined in \eqref{eq:alpha-def} and \eqref{eq:beta-def}, respectively.
%\begin{equation}
%\label{eq:beta-def}
%\beta(x): = \frac{M_s}{\gamma} \ln\frac{\sinh(\gamma x)}{x}.
%\end{equation}
The following basic estimates of these two functions will be used in  later   analysis.
%$\alpha(x)$ and $\beta(x)$.

%In the following sections, we will develop mixed finite element methods to solve the equivalent equations \eqref{eq:H} and \eqref{eq:NS1-n} of the FHD model. To the mathematical analysis of the desired finite element methods, we need some basic estimates of $\alpha(x)$ and $\beta(x) = \frac{M_{s}}{\gamma}\ln\frac{\sinh(\gamma x)}{x}$.
%We have the following lemma about $\alpha(x)$ and $\beta(x)$.
\begin{lemma}\label{lem:alphaH}
\begin{itemize}
\item[(i)] There exist a positive constants $C_1$ and $C_\alpha$ such that for any $x > 0$, there hold
$$
1 <\alpha(x) \leq C_1, \qquad 
|\alpha^\prime(x)| \leq C_\alpha \ ; %\qquad\forall~~x > 0;
$$ 
%which implies
%$$
%|\alpha(x_1) - \alpha(x_2)| \leq C_\alpha|x_1-x_2|\qquad\forall~~x_1,~x_2 >0;
%$$
\item[(ii)] For any $x > 0$, there holds
$$
|\beta^{\prime}(x)| \leq M_{s}.
$$
%which yields
%$$
%|\beta(x_{1}) - \beta(x_{2})| \leq M_{s}|x_{1} - x_{2}|\qquad\forall~~x_{1},~x_{2} > 0.
%$$
\end{itemize}
\end{lemma}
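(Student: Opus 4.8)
The plan is to base everything on the Langevin function $L(t):=\coth(t)-\tfrac1t$, since $\alpha(x)=1+\tfrac{M_s}{x}L(\gamma x)$ and (as I will verify) $\beta'(x)=M_sL(\gamma x)$. The key preliminary fact is the two-sided bound $0<L(t)<1$ for all $t>0$. For the lower bound I set $h(t):=t\cosh t-\sinh t$; then $h(0)=0$ and $h'(t)=t\sinh t>0$ for $t>0$, so $h(t)>0$, which rearranges to $\coth t>\tfrac1t$, i.e. $L(t)>0$. For the upper bound I use the identity $\coth t-1=\tfrac{2}{e^{2t}-1}$ together with the elementary inequality $e^{2t}>1+2t$ for $t>0$, which gives $\tfrac{2}{e^{2t}-1}<\tfrac1t$, i.e. $L(t)<1$.

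Part (ii) then follows almost immediately. Differentiating $\beta(x)=\tfrac{M_s}{\gamma}\bigl(\ln\sinh(\gamma x)-\ln x\bigr)$ gives $\beta'(x)=\tfrac{M_s}{\gamma}\bigl(\gamma\coth(\gamma x)-\tfrac1x\bigr)=M_sL(\gamma x)$, so $0<\beta'(x)<M_s$, and in particular $|\beta'(x)|\le M_s$.

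For part (i), the lower bound $\alpha(x)>1$ is immediate from $\alpha(x)=1+\tfrac{M_s}{x}L(\gamma x)$ together with $\tfrac{M_s}{x}L(\gamma x)>0$. For the upper bound and the derivative bound I reparametrize by $g(t):=L(t)/t$, so that $\alpha(x)=1+M_s\gamma\,g(\gamma x)$ and $\alpha'(x)=M_s\gamma^2 g'(\gamma x)$. The point is that $g$ has only removable singularities at the two ends of $(0,\infty)$: using the expansion $L(t)=\tfrac{t}{3}-\tfrac{t^3}{45}+O(t^5)$ one gets $g(t)\to\tfrac13$ as $t\to0^+$, while $L(t)\to1$ gives $g(t)\sim\tfrac1t\to0$ as $t\to+\infty$; the same expansions yield $g'(t)\to0$ at both ends. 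Hence $g$ and $g'$ are continuous on $(0,\infty)$ with finite limits as $t\to0^+$ and as $t\to+\infty$, so each is bounded on $(0,\infty)$, say $\sup_t g\le C_g$ and $\sup_t|g'|\le C_g'$; this yields $\alpha(x)\le 1+M_s\gamma C_g=:C_1$ and $|\alpha'(x)|\le M_s\gamma^2 C_g'=:C_\alpha$.

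The main obstacle is the behavior near $x=0$: both $\tfrac1x$ and $L(\gamma x)$ are individually singular or vanishing there, and it is the cancellation captured by the Taylor expansion of $\coth$ that makes $g$ and $g'$ bounded. The derivative bound is the most delicate step, since it requires not merely that $g$ be bounded but that $g'$ possess finite limits at $0$ and $\infty$; the limit at $0$ is the subtle one, where the leading $O(t)$ behaviour of $g'$ must be extracted from the series rather than from a naive quotient-rule estimate. I note that with a little extra monotonicity work (showing $g$ is decreasing, equivalently $L(t)\le\tfrac{t}{3}$) one can replace $C_1$ by the sharp value $1+\chi_0=1+M_s\gamma/3$, but this refinement is not needed for the stated existence claim.
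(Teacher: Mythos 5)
Your proof is correct and takes essentially the same route as the paper's: the nontrivial constants $C_1$ and $C_\alpha$ are obtained, exactly as in the paper, from continuity on $(0,\infty)$ together with finite limits at $0^+$ and $+\infty$ (you phrase this for $g(t)=L(t)/t$ and $g'$, the paper phrases it for $\alpha$ and $\alpha'$ directly, which is just a reparametrization), and the lower bound $\alpha>1$ rests on the same monotonicity fact $\coth t>1/t$. The only minor difference is in part (ii), where you get $0<\beta'(x)<M_s$ from the explicit inequality $\coth t-1=\tfrac{2}{e^{2t}-1}<\tfrac1t$, while the paper derives $0<\beta'(x)\le M_s$ from the monotone increase of $\beta'$ together with its limit $M_s$ at infinity; both are sound.
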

\begin{proof}
 (i) We first show $
1 <\alpha(x) \leq C_1. $ %\qquad\forall~~ x > 0
 %prove the fact that $y\coth(y) > 1$ for all $y > 0$.  
On one hand, the L'Hopital law implies that
$$
\lim\limits_{y \rightarrow 0^+} y\coth(y) = 1,
$$
which, together with the fact that
$$
(y\coth(y))^\prime  = \coth(y) - y\csch^2(y) = \frac{e^{2y} - e^{-2y} - 4y}{(e^y - e^{-y})^2}>0 \quad \forall~~ y > 0,
$$
%and
%$$
%(e^{2y} - e^{-2y} -4y)^\prime = 2(e^{2y} + e^{-2y}) - 4 > 0,
%$$
shows 
$$
y\coth(y) > 1\qquad \forall~~ y > 0.
$$
Therefore,
$$
\alpha(x) = 1 + \gamma M_s\frac{y\coth(y) -1 }{y^2} > 1\qquad\text{with }y = \gamma x,\text{ for }\forall~~x >0.
$$
On the other hand, we easily see  that
$$
\lim\limits_{x\rightarrow 0^+} \alpha(x) = 1+ \frac{\gamma M_s}{3}\qquad\text{and}\qquad\lim\limits_{x\rightarrow + \infty} \alpha(x) = 1,
$$
%Thus, for $\epsilon  = 1$, there exist $0<x_1<x_2$ such that
%$$
%\left|\alpha(x) - 1 - \frac{\gamma M_s}{3}\right| < 1 \qquad\forall~~ 0 < x < x_1
%$$ 
%and
%$$
%|\alpha(x) - 1| < 1\qquad\forall~~x > x_2.
%$$
which, together the continuity of   $\alpha(x)$  for $x>0$, indicate that 
%on  the  interval $[x_1,x_2]$, 
there exists a positive constant $C_1$ such that
$$
\alpha(x) \leq C_1\qquad\forall~~x>0.% \in [x_1,x_2].
$$

Second, let us show $
|\alpha^\prime(x)| \leq C_\alpha.$   It is easy to get
\begin{equation*}
\begin{split}
\alpha^\prime(x) & = \frac{2M_s}{\gamma x^3} - \frac{M_s}{x^2}\coth(\gamma x) - \frac{\gamma M_s}{x}\csch^2(\gamma x) \\
& = \frac{\gamma^2 M_s}{y^3}(2 - y^2\csch^2(y) - y\coth(y))\qquad\qquad\text{with } y = \gamma x.
\end{split}
\end{equation*}
%{\red stop here!!}
The L'Hopital law implies that
$$
\lim\limits_{x\rightarrow 0^+}\alpha^\prime(x) = 0\qquad\text{and}\qquad \lim\limits_{x\rightarrow +\infty}\alpha^\prime(x) = 0.
$$
%Therefore, for $\epsilon = 1$, there exists $x_1^\prime>0$ and $x_2^\prime>0$ such that
%$$
%|\alpha^\prime(x)| < 1\qquad\forall~~0<x<x_1^\prime
%$$
%and
%$$
%|\alpha^\prime(x)| < 1\qquad\forall~~x>x_2^\prime.
%$$
Since $\alpha^\prime(x)$ is continuous for $x>0$, we know that
%on the closed interval $[x_1^\prime,x_2^\prime]$, 
there exists a positive constant $C_\alpha$ such that
$$
|\alpha^\prime(x)| \leq C_\alpha \qquad \forall x>0.
$$
As a result,  the conclusions of (i) follow.
%Let $C_\alpha = \max\{1,C\}$, then we get the bound of $\alpha^\prime(x)$.
%For any $x_1,~x_2\in (0,+\infty)$ ($x_1\leq x_2$), the differential mean theorem implies that there exists a $\xi\in (x_1,x_2)$ such that
%$$
%\alpha(x_2) - \alpha(x_1) = \alpha^\prime(\xi)(x_2 - x_1).  
%$$
%The bounded of $\alpha^\prime(x)$ implies the desired result.

 (ii)  It is easy to find that
$$
\beta^{\prime}(x) = M_{s}\left(\coth(\gamma x) - \frac{1}{\gamma x}\right) = M_{s} \left(\coth(y) - \frac{1}{y}\right) 
$$
with $y = \gamma x>0$. The L'Hopital law implies that
$$
\lim\limits_{x\rightarrow 0^{+}} \beta^{\prime}(x) = 0\quad\text{and}\quad\lim\limits_{x \rightarrow + \infty}\beta^{\prime}(x) = M_{s}.
$$
The fact that 
$$
\left( \coth(y) - \frac{1}{y}\right)^{\prime} = \frac{1-y^{2}\csch^{2}(y)}{y^{2}} = \frac{1+y\csch(y)}{y^{2}}\frac{e^{y} - e^{-y} - 2y}{e^{y} - e^{-y}} > 0 \qquad\forall~~y>0
$$
implies that $\beta^{\prime}(x)$ is a monotonically increasing function on the interval $(0,+\infty)$. We conclude that
$$
0< \beta^{\prime}(x) \leq M_{s}.
$$
This finishes the proof.
%For any $x_{1},~x_{2}$ in the interval $(0,+\infty)$, there exists a $\xi \in (x_{1},x_{2})$ such that
%$$
%|\beta(x_{1}) - \beta(x_{2})| = |\beta^{\prime}(\xi)||x_{1} - x_{2}| \leq M_{s}|x_{1} - x_{2}|.
%$$
\end{proof}

\subsection{Weak formulations}
Based on the FHD model \eqref{eq:eq-sys} and Remark \ref{rem2.1}, we consider the following 
  weak formulations: Find $\phi\in H_0^1(\Omega)$, $\bs H \in\bs H_0(\curl)$, $\bs u \in \bs H_{0}^{1}(\Omega)$ and $\tilde p \in L_0^2(\Omega)$
%, $\psi \in L_{0}^{2}(\Omega)$ and $p \in L_{0}^{2}(\Omega)$, 
such that
\begin{align}
\label{eq:weak-maxwell-1}
& a(\phi;\phi,\tau) = -(g,\tau)\qquad\qquad\qquad\qquad\qquad\qquad\qquad\ \,\forall~~\tau \in  H_{0}^1(\Omega),\\
%\label{eq:weak-maxwell-n}(\nabla\kappa,\bs\sigma) = 0 \qquad&\forall~~\kappa\in H_0^1(\Omega),\\
\label{eq:weak-maxwell-2} & (\bs H,\bs C) - (\nabla\phi,\bs C) = 0\quad\qquad\qquad\qquad\qquad\qquad\qquad \forall~~\bs C \in  \bs H_{0}(\curl),\\
%\label{eq:weak-maxwell-3} & (\bs M,\bs F) = \left(M_s(\coth(\gamma H) - \frac{1}{\gamma H})\frac{\bs H}{H},\bs F \right)\ \ \,\qquad\qquad  \forall~~\bs F \in \bs H_0(\curl), \\
\label{eq:weak-NS-1}
& (\rho(\bs u\cdot\nabla)\bs u,\bs v) + \eta(\nabla\bs u,\nabla\bs v) -(\tilde p,\nabla\cdot\bs v)  
 = (\bs f,\bs v)
\qquad\forall~~\bs v \in  \bs H_{0}^{1}(\Omega),\\
\label{eq:weak-NS-2} & (\nabla\cdot\bs u,q) = 0\,\quad\qquad\qquad\qquad\qquad\qquad\qquad\qquad\qquad\forall~~q \in L_{0}^{2}(\Omega), 
%\\
%\label{eq:weak-NS-3}& (\psi,\chi) = \left(\frac{M_{s}}{\gamma} \ln \frac{\sinh(\gamma H)}{H},\chi \right)\,\,\quad\qquad\qquad\qquad\qquad\forall~~\chi \in L_{0}^{2}(\Omega), \\
%\label{eq:weak-p} & (p,\theta) = (\tilde p,\theta) + \mu_0(\psi,\theta)\ \ \qquad\qquad\qquad\qquad\qquad\qquad\forall\theta\in L_0^2(\Omega),
\end{align}
where  $a(\cdot;\cdot,\cdot):~H^1(\Omega) \times H^1(\Omega) \times H^1(\Omega) \rightarrow \mathbb R$ is defined by
$$
a(w;\phi,\tau) := \int_{\Omega}\alpha(|\nabla w|) \nabla\phi \cdot\nabla \tau \dd \Omega\qquad\forall~~w,~\phi,~\tau \in H^1(\Omega).
$$
\begin{remark} As shown in Remark \ref{rem2.1}, once the variables $\phi$, $\bs H$, $\bs u$ and $\tilde p$ are solved,   the other variables, i.e.  $\bs M$, $\psi$ and $p$,  can immediately be obtained by the third, the sixth and the seventh equations of \eqref{eq:eq-sys}, respectively.
%In the weak formulation, we only find the variables $\phi$, $\bs H$, $\bs u$ and $\tilde p$. For the variables $\bs M$, $\psi$ and $p$, we can get them  
\end{remark}

In what follows, we discuss the existence and uniqueness of the solutions to the weak formulations \eqref{eq:weak-maxwell-1} - \eqref{eq:weak-NS-2}. 

We first consider %the existence  of the solution  to the 
the nonlinear equation  \eqref{eq:weak-maxwell-1}.  
It is easy to see that, for any given $w \in H^1(\Omega)$, $a(w;\cdot,\cdot)$ is a bilinear form on   $H^1(\Omega)$.  Recall  that  the   Poincar\'e inequality  % on the Sobolev space $H_0^1(\Omega)$
\begin{equation*}\label{eq:poin}
\|v\| \leq C_p \|\nabla v\| \qquad\forall~~v \in H_0^1(\Omega),
\end{equation*}
 with $C_p > 0$ being a constant depending only on $\Omega$, means that the semi-norm $\|\nabla (\cdot)\|$ is also a norm  on $H_0^1(\Omega)$. Then, from Lemma \ref{lem:alphaH} (i),
we easily obtain    the following uniform coercivity and continuity results  for $a(w;\cdot,\cdot)$:
%Lemma \ref{lem:alphaH} implies
\begin{lemma}\label{lem:cocervity}
For any $  w \in H_0^1(\Omega)$ and $\phi, \tau\in H_0^1(\Omega)$, we have
$$
a(w;\phi,\phi) \geq \|\nabla\phi\|^2 
$$
and
$$
a(w;\phi,\tau) \leq C_1 \|\nabla\phi\|\|\nabla \tau\|.
$$
\end{lemma}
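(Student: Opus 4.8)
The plan is to derive both estimates by inserting the pointwise bounds on $\alpha$ from Lemma~\ref{lem:alphaH}(i) directly under the integral sign, and then, for the continuity part, invoking the Cauchy–Schwarz inequality. Since the uniform bounds $1 < \alpha(x) \leq C_1$ are already in hand, the argument is a direct computation with no genuine analytical difficulty.

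For the coercivity I would first write
$$
a(w;\phi,\phi) = \int_\Omega \alpha(|\nabla w|)\,|\nabla\phi|^2\dd\Omega,
$$
using $\nabla\phi\cdot\nabla\phi = |\nabla\phi|^2$. By the lower bound $\alpha(x) > 1$ from Lemma~\ref{lem:alphaH}(i), the integrand dominates $|\nabla\phi|^2$ pointwise, so that
$$
a(w;\phi,\phi) \geq \int_\Omega |\nabla\phi|^2\dd\Omega = \|\nabla\phi\|^2.
$$

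For the continuity I would apply the upper bound $\alpha(|\nabla w|) \leq C_1$ to pull the scalar factor out, then use the pointwise Cauchy–Schwarz inequality $\nabla\phi\cdot\nabla\tau \leq |\nabla\phi|\,|\nabla\tau|$ followed by the $L^2$ Cauchy–Schwarz inequality:
$$
a(w;\phi,\tau) \leq C_1 \int_\Omega |\nabla\phi|\,|\nabla\tau|\dd\Omega \leq C_1 \|\nabla\phi\|\,\|\nabla\tau\|.
$$

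The only point deserving mild care is the behavior of $\alpha$ on the set where $|\nabla w|$ vanishes, since $\alpha$ is nominally defined only for $x>0$; this is harmless because $\alpha$ extends continuously to $x=0$ with $\alpha(0^+) = 1 + \gamma M_s/3 > 1$, so the bounds $1 < \alpha \leq C_1$ hold almost everywhere regardless of $w$. Consequently I do not anticipate any substantive obstacle in this lemma.
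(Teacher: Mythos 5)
Your proof is correct and is exactly the argument the paper intends: the paper states this lemma as following ``easily'' from the uniform bounds $1<\alpha(x)\leq C_1$ of Lemma~\ref{lem:alphaH}(i), which is precisely your pointwise insertion of those bounds plus Cauchy--Schwarz. Your additional remark on the continuous extension of $\alpha$ to $x=0$ is a small point the paper leaves implicit, and you handle it correctly.
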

 
 %Further more,  w
 We have  the following wellposedness results for equation \eqref{eq:weak-maxwell-1}.
 
 \begin{theorem}\label{well-phi}
The nonlinear equation \eqref{eq:weak-maxwell-1} has at least one solution $\phi\in H_0^1(\Omega)$, and there holds 
\begin{equation}\label{eq:phi-bd}
\|\nabla\phi\| \leq \frac{1}{\mu_0}\|\bs H_e\|.
%C_p\|g\|.
\end{equation}
Moreover, \eqref{eq:weak-maxwell-1} admits   at most one solution    $\phi\in H_0^1(\Omega)$  satisfying %\in W^{1,\infty}(\Omega) \cap H_0^1(\Omega)$ with 
\begin{equation}\label{eq:ass-phi}
 \|\nabla\phi\|_{L^\infty} < 1/C_\alpha.
\end{equation}
%then it is the unique solution of \eqref{eq:weak-maxwell-1}.
%, the $\phi$ is the unique solution of \eqref{eq:weak-maxwell-1}.
\end{theorem}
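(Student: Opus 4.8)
The plan is to establish the three claims in turn: the a priori estimate \eqref{eq:phi-bd}, which every solution must obey; existence of at least one solution; and the conditional uniqueness under \eqref{eq:ass-phi}. For the a priori bound, let $\phi$ be any solution of \eqref{eq:weak-maxwell-1} and test with $\tau=\phi$. The coercivity in Lemma \ref{lem:cocervity} gives $\|\nabla\phi\|^2\le a(\phi;\phi,\phi)=-(g,\phi)$. Since $g=\frac1{\mu_0}\div\bs H_e$ and $\phi\in H_0^1(\Omega)$, integration by parts (the boundary term vanishing because $\phi|_{\partial\Omega}=0$) yields $-(g,\phi)=\frac1{\mu_0}(\bs H_e,\nabla\phi)\le\frac1{\mu_0}\|\bs H_e\|\,\|\nabla\phi\|$, and dividing by $\|\nabla\phi\|$ gives \eqref{eq:phi-bd}. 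This computation is entirely routine.

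For existence I would use a Galerkin--Brouwer argument. Choose nested finite-dimensional subspaces $V_1\subset V_2\subset\cdots$ with $\bigcup_n V_n$ dense in $H_0^1(\Omega)$, and on each $V_n$ consider the discrete problem $a(\phi_n;\phi_n,\tau)=-(g,\tau)$ for all $\tau\in V_n$. Defining $P_n\colon V_n\to V_n$ by $(\nabla P_n v,\nabla\tau)=a(v;v,\tau)+(g,\tau)$, which is continuous, the test $\tau=v$ gives $(\nabla P_n v,\nabla v)\ge\|\nabla v\|^2-\frac1{\mu_0}\|\bs H_e\|\,\|\nabla v\|$, which is positive once $\|\nabla v\|>\frac1{\mu_0}\|\bs H_e\|$; Brouwer's theorem then yields a zero $\phi_n$ of $P_n$, i.e. a discrete solution, obeying the uniform bound \eqref{eq:phi-bd}. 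Passing to a subsequence, $\phi_n\rightharpoonup\phi$ in $H_0^1(\Omega)$; since $\alpha\le C_1$ by Lemma \ref{lem:alphaH}(i), the fluxes are bounded in $\bs L^2(\Omega)$, so $\alpha(|\nabla\phi_n|)\nabla\phi_n\rightharpoonup\bs\chi$ and in the limit $(\bs\chi,\nabla\tau)=-(g,\tau)$ for every $\tau\in H_0^1(\Omega)$. The crux, and the step I expect to be the main obstacle, is to identify $\bs\chi=\alpha(|\nabla\phi|)\nabla\phi$, since weak convergence of $\nabla\phi_n$ alone does not control the nonlinearity. I would close this gap with Minty's monotonicity trick: the vector field $\bs\xi\mapsto\alpha(|\bs\xi|)\bs\xi$ is monotone because its Jacobian has eigenvalues $\alpha(|\bs\xi|)>1$ and $\big(r\alpha(r)\big)'\big|_{r=|\bs\xi|}=1+\beta''(|\bs\xi|)>0$, the latter positivity following from the monotonicity of $\beta'$ established in Lemma \ref{lem:alphaH}(ii).

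For the conditional uniqueness, suppose $\phi_1,\phi_2$ both solve \eqref{eq:weak-maxwell-1} with $\phi_1$ satisfying \eqref{eq:ass-phi}, set $e:=\phi_1-\phi_2$, subtract the two equations and test with $\tau=e$. Splitting $\alpha(|\nabla\phi_1|)\nabla\phi_1-\alpha(|\nabla\phi_2|)\nabla\phi_2=\alpha(|\nabla\phi_2|)\nabla e+\big(\alpha(|\nabla\phi_1|)-\alpha(|\nabla\phi_2|)\big)\nabla\phi_1$, the first piece is bounded below by $\|\nabla e\|^2$ (Lemma \ref{lem:cocervity}), while for the second I use the Lipschitz bound $|\alpha(|\nabla\phi_1|)-\alpha(|\nabla\phi_2|)|\le C_\alpha|\nabla e|$ from Lemma \ref{lem:alphaH}(i) together with $\|\nabla\phi_1\|_{L^\infty}$ to get a contribution $\le C_\alpha\|\nabla\phi_1\|_{L^\infty}\|\nabla e\|^2$. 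This leaves $\big(1-C_\alpha\|\nabla\phi_1\|_{L^\infty}\big)\|\nabla e\|^2\le0$, and the smallness assumption \eqref{eq:ass-phi} makes the prefactor positive, forcing $\nabla e=0$ and hence $e=0$.

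In summary, the only genuinely delicate point in the whole argument is the nonlinear limit passage in the existence part, which I resolve through the monotonicity of $\bs\xi\mapsto\alpha(|\bs\xi|)\bs\xi$; the a priori bound and the uniqueness computation are elementary once the estimates of Lemma \ref{lem:alphaH} and Lemma \ref{lem:cocervity} are in hand.
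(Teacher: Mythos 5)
Your proposal is correct, and two of its three parts coincide with the paper's own proof: the a priori bound \eqref{eq:phi-bd} is obtained exactly as in the paper (test with $\tau=\phi$, use coercivity from Lemma \ref{lem:cocervity}, and integrate $g=\frac{1}{\mu_0}\div\bs H_e$ by parts), and your conditional uniqueness argument is the paper's argument up to relabeling: the same splitting of the flux difference into a coercive term plus the perturbation $\big(\alpha(|\nabla\phi_1|)-\alpha(|\nabla\phi_2|)\big)\nabla\phi_1$, absorbed via $|\alpha'|\leq C_\alpha$ and \eqref{eq:ass-phi}. Where you genuinely diverge is existence. The paper does not argue this step at all; it simply cites Lemma \ref{lem:cocervity} together with the existence theorem of Boccardo--Murat--Puel \cite{Boccardo1992}. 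You instead give a self-contained Galerkin--Brouwer construction and close the weak limit by Minty's trick, which forces you to verify that $\bs\xi\mapsto\alpha(|\bs\xi|)\bs\xi$ is monotone. That verification is the real content of your proof and it is right: the Jacobian $\alpha(r)I+\alpha'(r)\bs\xi\bs\xi^{\intercal}/r$ (with $r=|\bs\xi|$) has eigenvalues $\alpha(r)>1$ and $(r\alpha(r))'=1+\beta''(r)>1$, where the identity $r\alpha(r)=r+\beta'(r)$ is immediate from \eqref{eq:alpha-def} and \eqref{eq:beta-def}, and $\beta''>0$ is established inside the \emph{proof} of Lemma \ref{lem:alphaH}(ii) --- note it is not part of that lemma's statement, so strictly you should cite the argument there rather than the lemma itself. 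As to what each route buys: the paper's citation is shorter and covers general Leray--Lions-type operators, while your argument is elementary, constructive, and exposes structure the paper never uses. Indeed, your eigenvalue bound shows the flux satisfies $\big(F(\bs\xi)-F(\bs\eta)\big)\cdot(\bs\xi-\bs\eta)\geq|\bs\xi-\bs\eta|^2$ (equivalently, $F$ is the gradient of the strictly convex potential $\tfrac12|\bs\xi|^2+\beta(|\bs\xi|)$); subtracting the equations for two solutions and testing with their difference then gives $0\geq\|\nabla(\phi_1-\phi_2)\|^2$ directly, i.e. \emph{unconditional} uniqueness, making the smallness hypothesis \eqref{eq:ass-phi} superfluous at the continuous level. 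You stop short of drawing this conclusion and instead reproduce the weaker conditional argument; that is not an error, but your key lemma proves more than you use.
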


\begin{proof}  The existence of a solution  $\phi\in H_0^1(\Omega)$ follows from Lemma \ref{lem:cocervity} and \cite[Page 332, Theorem 2]{Boccardo1992} directly.
%, we immediately get the following wellposedness result for equation \eqref{eq:weak-maxwell-1}.

Recalling that $ g=\frac{1}{\mu_0}\div\bs H_e$,
  %and $\bs H_e\cdot\bs n |_{\partial\Omega} = 0$, 
  from \eqref{eq:weak-maxwell-1} and the above lemma  we have 
   $$
\|\nabla\phi\|^2\leq a(\phi;\phi,\phi)=-(g,\phi)=\frac{1}{\mu_0}(\bs H_e, \nabla \phi)\leq \frac{1}{\mu_0}\|\bs H_e\| \| \nabla \phi\|.
$$
This yields the estimate \eqref{eq:phi-bd}.

The thing left is to show the uniqueness of the solution under condition \eqref{eq:ass-phi}.  In fact, let $\phi$ and $\tilde\phi$ be any two solutions of \eqref{eq:weak-maxwell-1} satisfying \eqref{eq:ass-phi}, we have
 $$
a(\tilde\phi;\tilde\phi,\tau) = a(\phi;\phi,\tau)\qquad\forall~~\tau \in H_0^1(\Omega),
$$
which yields 
\begin{align*}
a(\tilde\phi;\tilde\phi-\phi,\tau) = \int_\Omega \big(\alpha(|\nabla\phi|) - \alpha(|\nabla\tilde\phi|) \big)\nabla\phi\cdot\nabla\tau\dd\Omega\qquad\forall~~\tau \in H_0^1(\Omega).
\end{align*}
Taking $\tau = \tilde\phi - \phi$ in the above equation and using Lemma \ref{lem:alphaH}  give
\begin{align*}
\|\nabla(\tilde\phi - \phi)\|^2 & \leq a(\tilde \phi;\tilde \phi-\phi,\tilde\phi - \phi) \\
& = a(\phi;\phi,\tilde\phi - \phi) - a(\tilde\phi;\phi,\tilde\phi - \phi) \\
& \leq C_\alpha\|\nabla \phi\|_{L^\infty}\|\nabla(\tilde\phi -  \phi)\|^2,
\end{align*}
which, together with \eqref{eq:ass-phi}, implies $\tilde \phi = \phi$. This finishes the proof.
%, which means $\phi$ satisfying \eqref{eq:ass-phi} is the unique solution of \eqref{eq:weak-maxwell-1}.
\end{proof}

For equation \eqref{eq:weak-maxwell-2}, we have the following conclusion:
\begin{theorem}\label{H-well}
For  any given $\phi \in H_0^1(\Omega)$, equation \eqref{eq:weak-maxwell-2} admits a  unique solution $\bs H = \nabla\phi \in \bs H_0(\curl)$, which   means  $
  \curl \bs H = 0.
  $
\end{theorem}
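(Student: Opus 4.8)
The plan is to exhibit $\bs H = \nabla\phi$ as an explicit solution of \eqref{eq:weak-maxwell-2} and then to deduce uniqueness directly from the $L^2$ structure of the equation; note that the bilinear form $(\bs H,\bs C)$ is only the $\bs L^2$-inner product restricted to $\bs H_0(\curl)\times\bs H_0(\curl)$ and is \emph{not} coercive in the graph norm of $\bs H(\curl)$, so a constructive argument is preferable to a Lax--Milgram argument. The first task is therefore to verify that $\nabla\phi$ is an admissible member of the trial/test space $\bs H_0(\curl)$. Since $\phi\in H_0^1(\Omega)$ we immediately have $\nabla\phi\in\bs L^2(\Omega)$, and because the curl of a gradient vanishes in the distributional sense, $\curl(\nabla\phi)=0$ is trivially square integrable; hence $\nabla\phi\in\bs H(\curl)$ with $\curl(\nabla\phi)=0$.

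The remaining (and only nontrivial) point is the homogeneous tangential boundary condition $\nabla\phi\times\bs n=0$ on $\partial\Omega$, i.e. that $\nabla\phi$ lies in $\bs H_0(\curl)$ rather than merely in $\bs H(\curl)$. I would establish this by a density argument: choose $\phi_k\in C_0^\infty(\Omega)$ with $\phi_k\to\phi$ in $H^1(\Omega)$; each $\nabla\phi_k$ has compact support and so belongs to $\bs H_0(\curl)$, while $\curl(\nabla\phi_k)=0=\curl(\nabla\phi)$ guarantees $\nabla\phi_k\to\nabla\phi$ in the graph norm of $\bs H(\curl)$. Since $\bs H_0(\curl)$ is closed in $\bs H(\curl)$ (being the kernel of the continuous tangential trace map), the limit $\nabla\phi$ lies in $\bs H_0(\curl)$. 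With this in hand, substituting $\bs H=\nabla\phi$ into \eqref{eq:weak-maxwell-2} makes the left-hand side $(\nabla\phi,\bs C)-(\nabla\phi,\bs C)=0$ for every $\bs C$, so existence is settled.

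For uniqueness, suppose $\bs H_1,\bs H_2\in\bs H_0(\curl)$ both satisfy \eqref{eq:weak-maxwell-2}. Subtracting gives $(\bs H_1-\bs H_2,\bs C)=0$ for all $\bs C\in\bs H_0(\curl)$. The key observation is that the difference $\bs H_1-\bs H_2$ is itself an admissible test function, so choosing $\bs C=\bs H_1-\bs H_2$ yields $\|\bs H_1-\bs H_2\|^2=0$ and hence $\bs H_1=\bs H_2=\nabla\phi$. Finally, since $\curl(\nabla\phi)=0$ was shown above, the unique solution satisfies $\curl\bs H=0$, recovering the nonconductive Maxwell relation \eqref{eq:maxwell1}.

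I expect the only genuine obstacle to be the verification of the tangential boundary condition $\nabla\phi\times\bs n=0$; everything else (the distributional identity $\curl(\nabla\phi)=0$ and the uniqueness step) is essentially immediate once $\nabla\phi\in\bs H_0(\curl)$ is known. The cleanest route is the density/closedness argument above, which is precisely the construction already invoked around \eqref{eq:Hphi} via \cite{Arnold;Falk;Winther2006}.
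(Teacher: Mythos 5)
Your proof is correct, and its skeleton coincides with the paper's: exhibit $\nabla\phi$ as a member of $\bs H_0(\curl)$, observe that it solves \eqref{eq:weak-maxwell-2} trivially, and use the $\bs L^2$-inner-product structure (testing with a difference) to get uniqueness. The only real point of divergence is how the key inclusion $\nabla\phi\in\bs H_0(\curl)$ is justified. The paper disposes of it in one line by citing the de Rham complex of \cite{Arnold;Falk;Winther2006}; you instead prove it from scratch by density: approximate $\phi$ in $H^1(\Omega)$ by $\phi_k\in C_0^\infty(\Omega)$, note that each $\nabla\phi_k$ lies in $\bs H_0(\curl)$, that the identically vanishing curls upgrade $H^1$-convergence to graph-norm convergence, and conclude by closedness of $\bs H_0(\curl)$ as the kernel of the continuous tangential trace. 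This is sound (it implicitly uses that the paper's trace-based definition of $H_0^1(\Omega)$ agrees with the closure of $C_0^\infty(\Omega)$ on a Lipschitz domain, which is standard), and it has the virtue of being elementary and self-contained. What the paper's citation buys instead is parallelism with the discrete level: the same complex/commuting-diagram viewpoint reappears as assumption (A2), which gives $\grad S_h\subset \bs U_h$, so the discrete counterpart (Theorem \ref{Hh-well}) is proved by the identical one-line mechanism, whereas your density argument has no discrete analogue. Finally, a cosmetic remark: your uniqueness step (subtract two solutions and test with their difference) and the paper's single test function $\bs C=\bs H-\nabla\phi$ are the same computation; the paper's version just delivers existence and uniqueness in one stroke.
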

\begin{proof}
 The de Rham complex \cite{Arnold;Falk;Winther2006} on $2D$ and $3D$ domain implies that $\nabla\phi\in \bs H_0(\curl)$. Thus, taking $\bs C = \bs H - \nabla\phi \in \bs H_0(\curl)$ in \eqref{eq:weak-maxwell-2} yields 
%$$
%\|\bs H - \nabla\phi\|^2 = 0
%$$
%which means
$
\bs H = \nabla\phi. % \qquad\text{in }L^2(\Omega).
$
\end{proof}
%So for any given $\phi \in H_0^1(\Omega)$, there exists a unique $\bs H = \nabla\phi \in \bs H_0(\curl)$ satisfies \eqref{eq:weak-maxwell-2}.

Since    \eqref{eq:weak-NS-1}-\eqref{eq:weak-NS-2} are the weak formulations of  the Navier-Stokes equations, the following 
existence and uniqueness results are standard  (cf.  \cite[Page 285-287, Theorems 2.1 and 2.2]{Girault1986}). 

\begin{theorem}
%Let $\Omega$ be a bounded domain of $\mathbb R^d$ with a Lipschitz continuous boundary. 
Given $\bs f \in (H^{-1}(\Omega))^d$, there exists at least one pair $(\bs u,\tilde p) \in \bs H_0^1(\Omega) \times L_0^2(\Omega)$ satisfying \eqref{eq:weak-NS-1} - \eqref{eq:weak-NS-2}, and holds
$$
\|\nabla\bs u\| \leq \frac 1\eta \|\bs f\|_{-1},
$$
where $\|\bs f\|_{-1}: = \sup\limits_{\bs v \in \bs H_0^1(\Omega)}\frac{\int_\Omega \bs f \cdot \bs v\dd\Omega}{\|\nabla \bs v\|}.$
In addition, if
$$
(\rho \mathcal N/\eta^2)\|\bs f\|_{-1} <1\quad\text{with}\quad
%\left\{\begin{array}{l}
\mathcal N: = \sup\limits_{\bs u,\bs v,\bs w\in \bs H_0^1(\Omega)}\frac{\int_\Omega (\bs u\cdot\nabla)\bs v \cdot \bs w\dd\Omega}{\|\nabla \bs u\|\|\nabla \bs v\|\|\nabla \bs w\|},
%\\ 
%\|\bs f\|_{-1}: = \sup\limits_{\bs v \in \bs H_0^1(\Omega)}\frac{\int_\Omega \bs f \cdot \bs v\dd\Omega}{\|\bs v\|_1},
%\end{array}
%\right.
$$
then the solution pair  $(\bs u,\tilde p)$ is unique.
%problem \eqref{eq:weak-NS-1} - \eqref{eq:weak-NS-2} has a unique solution $(\bs u,\tilde p) \in \bs H_0^1(\Omega) \times L_0^2(\Omega)$.
\end{theorem}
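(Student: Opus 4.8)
The plan is to follow the classical Galerkin/fixed-point argument for the stationary Navier--Stokes equations, eliminating the pressure by passing to the solenoidal subspace and recovering it at the end via the inf--sup condition. Introduce the space $\bs V := \{\bs v \in \bs H_0^1(\Omega):~\nabla\cdot\bs v = 0\}$ and the trilinear form $b(\bs u;\bs v,\bs w) := \int_\Omega (\bs u\cdot\nabla)\bs v\cdot\bs w\dd\Omega$. For $\bs v\in\bs V$ the pressure term in \eqref{eq:weak-NS-1} vanishes, so it suffices to find $\bs u \in \bs V$ with $\rho\, b(\bs u;\bs u,\bs v) + \eta(\nabla\bs u,\nabla\bs v) = (\bs f,\bs v)$ for all $\bs v \in \bs V$. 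The structural fact I would record first is the antisymmetry $b(\bs u;\bs v,\bs w) = -b(\bs u;\bs w,\bs v)$ for $\bs u\in\bs V$, obtained by integration by parts using $\nabla\cdot\bs u=0$ and the vanishing boundary traces of $\bs v,\bs w\in\bs H_0^1(\Omega)$; in particular $b(\bs u;\bs v,\bs v)=0$.

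First I would establish existence. Choose an increasing sequence of finite-dimensional subspaces $\bs V_m \subset \bs V$ whose union is dense in $\bs V$, and seek a Galerkin solution $\bs u_m\in\bs V_m$. Testing the finite-dimensional equation with $\bs u_m$ and using $b(\bs u_m;\bs u_m,\bs u_m)=0$ gives $\eta\|\nabla\bs u_m\|^2 = (\bs f,\bs u_m) \le \|\bs f\|_{-1}\|\nabla\bs u_m\|$, hence the uniform a priori bound $\|\nabla\bs u_m\| \le \eta^{-1}\|\bs f\|_{-1}$. Existence of $\bs u_m$ itself follows from Brouwer's fixed point theorem applied to the associated continuous map on $\bs V_m$, the a priori bound confining the search to a closed ball. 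I would then extract a subsequence with $\bs u_m \rightharpoonup \bs u$ weakly in $\bs H_0^1(\Omega)$, and weak lower semicontinuity of the norm preserves the a priori bound in the limit, yielding the stated estimate $\|\nabla\bs u\|\le\eta^{-1}\|\bs f\|_{-1}$.

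The main obstacle is passing to the limit in the nonlinear term $b(\bs u_m;\bs u_m,\bs v)$, for which weak convergence alone is insufficient. Here I would invoke the compact embedding $\bs H_0^1(\Omega)\hookrightarrow\hookrightarrow\bs L^2(\Omega)$ (Rellich--Kondrachov), so that $\bs u_m\to\bs u$ strongly in $\bs L^2(\Omega)$; combining strong $L^2$ convergence of one factor with weak $H^1$ convergence of the gradient factor identifies the limit of $b(\bs u_m;\bs u_m,\bs v)$ with $b(\bs u;\bs u,\bs v)$ for each fixed $\bs v$ in the dense union, and then for all $\bs v\in\bs V$ by density. This gives $\bs u\in\bs V$ solving the reduced problem. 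To recover the pressure, note that $\bs v\mapsto(\bs f,\bs v)-\rho\,b(\bs u;\bs u,\bs v)-\eta(\nabla\bs u,\nabla\bs v)$ vanishes on $\bs V$; by the inf--sup (LBB) condition for the pair $\bs H_0^1(\Omega)\times L_0^2(\Omega)$, equivalently de Rham's theorem, there is a unique $\tilde p\in L_0^2(\Omega)$ representing this functional, completing \eqref{eq:weak-NS-1}--\eqref{eq:weak-NS-2}.

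For uniqueness under the smallness condition, let $(\bs u_1,\tilde p_1)$ and $(\bs u_2,\tilde p_2)$ be two solutions and set $\bs w := \bs u_1-\bs u_2\in\bs V$. Subtracting the momentum equations, writing $(\bs u_1\cdot\nabla)\bs u_1-(\bs u_2\cdot\nabla)\bs u_2 = (\bs w\cdot\nabla)\bs u_1 + (\bs u_2\cdot\nabla)\bs w$, and testing with $\bs v=\bs w$ makes the pressure term vanish (since $\nabla\cdot\bs w=0$) and kills $b(\bs u_2;\bs w,\bs w)$ by antisymmetry, leaving $\eta\|\nabla\bs w\|^2 = -\rho\,b(\bs w;\bs u_1,\bs w)$. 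Bounding the right-hand side through the definition of $\mathcal N$ and using $\|\nabla\bs u_1\|\le\eta^{-1}\|\bs f\|_{-1}$ gives $\eta\|\nabla\bs w\|^2 \le \rho\mathcal N\|\nabla\bs u_1\|\,\|\nabla\bs w\|^2 \le (\rho\mathcal N/\eta)\|\bs f\|_{-1}\|\nabla\bs w\|^2$, i.e. $\eta\big(1-(\rho\mathcal N/\eta^2)\|\bs f\|_{-1}\big)\|\nabla\bs w\|^2\le 0$. The hypothesis $(\rho\mathcal N/\eta^2)\|\bs f\|_{-1}<1$ forces $\|\nabla\bs w\|=0$, so $\bs u_1=\bs u_2$; the inf--sup condition then yields $\tilde p_1=\tilde p_2$, establishing uniqueness.
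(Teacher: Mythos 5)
Your proof is correct. The paper does not actually prove this theorem---it simply cites it as standard from Girault--Raviart (the theorems on pages 285--287 of \cite{Girault1986})---and your argument (Galerkin approximation on the solenoidal subspace with Brouwer's fixed point theorem, the energy bound from antisymmetry of the trilinear form, Rellich compactness to pass to the limit in the nonlinear term, de Rham/inf--sup recovery of the pressure, and the smallness-based energy estimate for uniqueness) is precisely the classical proof given in that reference.
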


\section{Finite element methods}
%In this section, we recall some finite element spaces, construct the finite element method for the weak formulation  \eqref{eq:weak-maxwell-1} - \eqref{eq:weak-NS-2}, show the existence and uniqueness of the finite element solutions, and obtain the optimal order error estimates. 

\subsection{Finite element spaces}
Let $\mathcal T_{h}$ be a quasi-uniform %shape regular 
 simplicial decomposition  of $\Omega$ with mesh size $h:=\max\limits_{K\in \mathcal T_h} h_K$, where $h_K$ denotes the diameter of $K$  for any $K \in \mathcal T_h$. 
 
 For an integer $l \geq 0$, let $\mathbb P_l(K)$ denote the set of polynomials, defined on $K\in \mathcal T_h$, of degree no more than $l$.
We introduce the following finite element spaces:
\begin{itemize}
  \item  $\bs V_{h} = (V_h)^d \ \big(\subset \text{or } \not\subset (H_{0}^{1}(\Omega))^d\big)$  is the Lagrange element space~\cite{Girault1986,Brezzi;Fortin1991,Girault;Raviart2012} for the velocity $\bs u$, with $\mathbb P_{l+1}(K) \subset  V_h|_K$ for any $K \in \mathcal T_h$.
%   \subset (H_{0}^{1}(\Omega))^d$
In particular, for the nonconforming case that $V_h\not\subset H_{0}^{1}(\Omega)$, 
  each $v_h \in V_h$  is required to satisfy the   following conditions: %for any $v_h \in V_h$,
  %can be a conforming or nonconforming finite element space in the following sense: 
\begin{itemize}
\item[(i)]  $v_h$   vanishes at all the nodes on   $\partial \Omega$;
\item[(ii)]  %$V_h = \{ v_h\in L^2(\Omega):~v_h|_{K} \in \mathbb P_{l+1}(K),~\forall~K\in \mathcal T_h; v_h \text{ vanishes at all the boundary nodes of } \Omega\}$ such that for any $v_h \in V_h$, 
$|v_h|_{1,h}:=(\sum\limits_{K\in \mathcal T_h} \|\nabla v_h\|_{K}^2)^{1/2}$ is a norm.
 \end{itemize}
% on space $V_h$. 
 Note that the classical nonconforming  Crouzeix-Raviart (CR) finite element ~\cite{Crouzeix1973CRele}  is corresponding to the nonconforming case with $l=0$. In the nonconforming case, the gradient and divergence operators, $\nabla$ and $\nabla\cdot$, in the finite element scheme \eqref{eq:fem-3} - \eqref{eq:fem-4}  will be  understood as $\nabla_h$ and $\nabla_h\cdot$, respectively, which  denote respectively the piecewise gradient and divergence operators acting on element by element in $\mathcal T_{h}$. 
 %in the   finite element scheme for the Navier-Stokes equation will be replaced by the element wise one, for example, $(\nabla\cdot\bs v_h,q)$ means $\sum\limits_{K \in \mathcal T_h} \int_{K} \nabla\cdot\bs v_h q\dd \Omega$.

  \item $W_{h} \subset L_{0}^{2}(\Omega)$ is the piecewise polynomial space %~\cite{Ciarlet1978} 
  for the `pressure' variable %s $p$, $\psi$, and 
  $\tilde p$, with $\mathbb P_l(K) \subset W_h|_K$ for any $K \in \mathcal T_h$. 
  \item $S_{h} \subset H_{0}^{1}(\Omega)$ is the Lagrange element space~\cite{Ciarlet1978} 
  for the new variable $\phi$, with $\mathbb P_{l+1}(K)\subset S_h|_K$ for any $K \in \mathcal T_h$.
  \item $\bs U_{h} \subset \bs H_{0}(\curl)$ is the edge element space~\cite{Nedelec1980,Nedelec1986} for the magnetic field $\bs H$, % and $\bs M$, 
  with $(\mathbb P_l(K))^d\subset \bs U_h|_K$ and $(\mathbb P_l(K))^{2d-3} \subset \curl \bs U_h|_K$ %for $d = 3$ and $\mathbb P_l(K) \subset \curl \bs U_h|_K$ for $d=2$,
   for any $K \in \mathcal T_h$.

\end{itemize}
In addition, we make the following assumptions for the above finite element spaces.
\begin{itemize}
  \item[(A1)] There holds the inf-sup condition
  \begin{equation}
\label{eq:inf-sup}
\inf\limits_{\bs v_h \in \bs V_{h}}\frac{(\nabla\cdot\bs v_h,q_h)}{\|\bs v_h\|_{1}} \geq \beta_{0} \|q_h\|\qquad\forall~~q_h \in W_{h},
\end{equation}
where $\beta_{0}>0 $ is a constant independent of $h$;
  \item[ (A2)] The diagram
  \begin{equation}\label{eq:exact-seq}
\begin{CD}
H_0^1@>{\grad}>>  \bs H_0(\curl)  \\
@VV \pi_h^{s} V @ VV \pi ^{c}_h V  \\\
S_{h} @>{\grad}>> \bs U_{h} \end{CD}
\end{equation}
is a commutative sequence. Here $\pi_h^s:~H_0^1(\Omega)\cap C^0(\overline\Omega)\rightarrow S_h$ and $\pi_h^c:~\bs H_0(\curl)\rightarrow \bs U_h$ are the classical interpolation operators, and $\grad$ denotes the gradient operator. Note that the diagram \eqref{eq:exact-seq} also indicates that 
$$ \grad S_{h} \subset \bs U_{h} .$$

\end{itemize}

We recall the following  inverse inequality:
%\YW{For any $s_h \in S_h$, there exists a constant $\tilde C_{inv} >0$, which is independent on $h$, satisfies the following so called inverse inequality
\begin{equation}\label{inverse}
%\tilde C_{inv} 
\|\nabla s_h\|_{\infty} \leq C_{inv} h^{-d/2}\|\nabla s_h\| \qquad \forall s_h \in S_h,
\end{equation}
where $  C_{inv} >0$ is a constant   independent of $h$. %}

\begin{remark}
There are many finite element spaces that satisfy (A1) and (A2). For example, we can choose $\bs V_h$ and $W_h$ as the Taylor-Hood element pairs. And the spaces $S_h$ and $\bs U_h$ can be respectively chosen as the lowest order Lagrange finite element space and the lowest order N\'ed\'elec edge element space ~\cite{Arnold;Falk;Gopalakrishnan2012,Arnold;Falk;Winther2006,Arnold;Falk;Winther2010}.
\end{remark}
\begin{remark}
In this paper, we consider the conforming finite element spaces for the Navier-Stokes equation for simply of notations. In fact the nonconforming finite element spaces which satisfying the inf-sup condition (A1), with replace the global differential operators to element wise, are also work for the Navier-Stokes equations. For example the $CR$ - $\mathbb P_0$ finite element spaces are work for the Navier-Stokes equations.
\end{remark}

\subsection{Finite element scheme} In view of  \eqref{eq:weak-maxwell-1}-\eqref{eq:weak-NS-2}, 
we consider the following    finite element scheme  for the FHD model: Find $\phi_h \in S_h$, $\bs H_h \in \bs U_h$, 
%$\bs M_h\in \bs U_h$, 
$\bs u_h \in \bs V_h$ and $\tilde p_h\in W_h$, 
%$\psi_h\in W_h$ and $p_h\in W_h$, 
such that  
\begin{align}
\label{eq:fem-1}
& a(\phi_h;\phi_h,\tau_h) = -(g,\tau_h)%\qquad\qquad\qquad\qquad\qquad\qquad\qquad\qquad\,\,\,\, 
&\forall~~\tau_h \in S_h,\\
\label{eq:fem-2}& (\bs H_h,\bs C_h) - (\nabla\phi_h,\bs C_h)  = 0 %\quad\qquad\qquad\qquad\qquad\qquad\qquad\qquad
&\forall~~\bs C_h \in \bs U_h,\\
%\label{eq:weak-maxwell-3-dis} & (\bs M_h,\bs F_h) = \left(M_s(\coth(\gamma H_h) - \frac{1}{\gamma H_h})\frac{\bs H_h}{H_h},\bs F_h \right)\,\,\,\,\,\quad\qquad\qquad  \forall~~\bs F_h \in \bs U_h, \\
\label{eq:fem-3} & \eta(\nabla\bs u_h,\nabla\bs v_h) +b(\bs u_h;\bs u_h,\bs v_h) - (\tilde p_h,\nabla\cdot\bs v_h)  = (\bs f,\bs v_h)
%\qquad \qquad\qquad\qquad\qquad\ \ \ 
&\forall~~\bs v_h \in \bs V_h, \\
\label{eq:fem-4} & (\nabla\cdot\bs u_h,q_h)  = 0\ %\qquad\qquad\qquad\qquad\qquad\qquad\qquad\qquad\qquad\qquad
&\forall~~q_h \in W_h,
%\label{eq:fem-5}& (\psi_h,\chi_h) = \left(\frac{M_s}{\gamma}\ln\frac{\sinh(\gamma H_h)}{H_h},\chi_h \right)  \ \ \quad\qquad\qquad\qquad\qquad\qquad\forall~~\chi_h \in W_h, \\
%\label{eq:fem-6} & (p_h,\theta_h) = (\tilde p_h,\theta_h) + \mu_0(\psi_h,\theta_h)\ \ \ \quad\qquad\qquad\qquad\qquad\qquad\qquad\forall~~\theta_h\in W_h.
\end{align}
where the trilinear form $ b(\cdot;\cdot,\cdot) : \bs H^{1}(\Omega) \times\bs H^{1}(\Omega) \times \bs H^{1}(\Omega) \rightarrow  \mathbb R$ is defined by
%$$
%b(\bs w;\bs u,\bs v) := b_1(\bs w;\bs u,\bs v) + \eta(\nabla\bs u,\nabla\bs v),
%$$
%and 
$$
b(\bs w;\bs u,\bs v) := \frac{\rho}{2}[((\bs w\cdot\nabla)\bs u,\bs v) - ((\bs w\cdot\nabla)\bs v,\bs u)].
$$

%\YW{We recall that, when the finite element space $V_h$ is chosen as the non-conforming one, the differential operators in the finite element schemes \eqref{eq:fem-3} - \eqref{eq:fem-3} for the Navier-Stokes equation will be replaced by the element wise one.}

\begin{remark}	\label{rem:sol}
Similar to  \eqref{eq:weak-maxwell-1}-\eqref{eq:weak-NS-2}, the finite element scheme \eqref{eq:fem-1} - \eqref{eq:fem-4} is a decoupled system.  We can first solve the nonlinear equation \eqref{eq:fem-1} to get $\phi_h$, and solve the Navier-Stokes system \eqref{eq:fem-3} - \eqref{eq:fem-4}   to get $\bs u_h$ and $\tilde p_h$. Then we can get $\bs H_h$ from \eqref{eq:fem-2}.   Finally,    we can get   $\bs M_h \in \bs U_h$, the approximation of the magnetization $\bs M$,   from
\begin{equation}\label{eq:weak-maxwell-3-dis}
(\bs M_h,\bs F_h) = \big(
%M_s(\coth(\gamma H_h) - \frac{1}{\gamma H_h})\frac{\bs H_h}{H_h}
(\alpha(H_h)-1)\bs H_h,\bs F_h \big)
\,\,\,\,\,\qquad \quad \forall~~\bs F_h \in \bs U_h
\end{equation} 
with $H_h:=|\bs H_h|$,   get  $\psi_h \in W_h$, the approximation of $\psi$, from
\begin{equation}\label{eq:fem-5}
(\psi_h,\chi_h) =\big(\beta(H_h),\chi_h\big) % = \left(\frac{M_s}{\gamma}\ln\frac{\sinh(\gamma H_h)}{H_h},\chi_h \right)  
\ \ \qquad\qquad\qquad\forall~~\chi_h \in W_h,
\end{equation} 
and get  $p_h \in W_h$, the   approximation of the pressure $p$,  from 
\begin{equation}\label{eq:fem-6}
p_h = \tilde p_h + \mu_0 \psi_h.
\end{equation} 
\end{remark}
\begin{remark}\label{rem:trilinear}
It is easy to see that the trilinear form $b(\cdot;\cdot,\cdot)$ is skew-symmetric with respect to the last two variables, i.e., 
\begin{equation}\label{eq:skew-s}
b(\bs w;\bs u ,\bs v ) = -b(\bs w ;\bs v ,\bs u )\qquad \forall ~\bs w ,~\bs u , ~\bs v  \in \bs H^{1}(\Omega) ,
\end{equation}
and that 
\begin{equation}\label{eq:breform}
b(\bs w;\bs u ,\bs v ) =\rho\big((\bs w\cdot\nabla)\bs u, \bs v\big) + \frac{\rho}{2}\big( (\nabla\cdot\bs w) \bs u,\bs v \big)\quad\forall~\bs w,~\bs u \in \bs H^{1}(\Omega) , \ \ \forall  \ \bs v \in \bs H_0^{1}(\Omega).
\end{equation}
As a result, the following two relations hold:%for any $\bs w \in \bs H^{1}(\Omega)$ with $\div\bs w = 0$ 
\begin{equation}
\label{eq:b-coer}
  b(\bs w;\bs v,\bs v) =0 \qquad\forall~  \bs w, \bs   v \in \bs H^{1}(\Omega),\\
\end{equation}
\begin{equation}
\label{eq:b-}
b(\bs w;\bs u ,\bs v ) =\rho\big((\bs w\cdot\nabla)\bs u, \bs v \big) \quad\forall~ \bs w, ~\bs u  \in \bs H^{1}(\Omega) \text{ with } \div\bs w = 0,\   \forall \ \bs v \in \bs H_0^{1}(\Omega).   \end{equation}
 \end{remark}

Theorems \ref{lem:solu1-2}-\ref{lem:solu3-4} show  the wellposedness of the finite element scheme \eqref{eq:fem-1} - \eqref{eq:fem-4}.
%, we only need to consider    the nonlinear equation \eqref{eq:fem-1}  and the Navier-Stokes system \eqref{eq:fem-3} - \eqref{eq:fem-4}.
 %have solutions.  
 %We have the following existence and uniqueness results for equation \eqref{eq:fem-1}.
\begin{theorem}\label{lem:solu1-2}
The nonlinear discrete equation \eqref{eq:fem-1} has at least one solution $\phi_h \in  S_h$,  and there holds %satisfying 
\begin{equation}\label{eq:nabla-g}
\|\nabla\phi_h \| \leq \frac{1}{\mu_0}\|\bs H_e\|.
\end{equation}
 Furthermore, if the external magnetic filed $\bs H_e$ satisfies 
\begin{equation}\label{eq:ass-He}
\| \bs H_e\| < \mu_0  C_{inv}^{-1}C_\alpha^{-1}  h^{d/2},
%\|\div\bs H_e\| < \tilde C_{inv}\mu_0C_\alpha^{-1} C_p^{-1} h^{d/2},
\end{equation}
%where $\tilde C_{inv} > 0$ is the constant in the inverse inequality, 
then   \eqref{eq:fem-1} admits a unique solution $\phi_h \in S_h$, and there holds
\begin{equation}\label{eq:phi-inf}
 \|\nabla\phi_h\|_\infty < 1/ C_\alpha.
 \end{equation}
\end{theorem}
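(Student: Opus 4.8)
The plan is to follow the same route as the continuous result Theorem~\ref{well-phi}, with two modifications: the infinite-dimensional existence theory (Boccardo) is replaced by a finite-dimensional fixed-point argument, and the pointwise smallness \eqref{eq:phi-inf} is obtained \emph{for free} from the inverse inequality \eqref{inverse} rather than being imposed as a hypothesis. For existence I would exploit that $S_h$ is finite-dimensional and that $\|\nabla(\cdot)\|$ is a norm on it (Poincar\'e). Define the continuous map $P:S_h\to S_h$ by $(\nabla P(v_h),\nabla\tau_h)=a(v_h;v_h,\tau_h)+(g,\tau_h)$ for all $\tau_h\in S_h$; continuity follows from the boundedness and continuity of $\alpha$ in Lemma~\ref{lem:alphaH}(i), and zeros of $P$ are exactly the solutions of \eqref{eq:fem-1}. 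Integration by parts gives $(g,v_h)=-\tfrac{1}{\mu_0}(\bs H_e,\nabla v_h)$, so the uniform coercivity of Lemma~\ref{lem:cocervity} yields $(\nabla P(v_h),\nabla v_h)\ge\|\nabla v_h\|^2-\tfrac{1}{\mu_0}\|\bs H_e\|\,\|\nabla v_h\|$, which is strictly positive once $\|\nabla v_h\|>\tfrac{1}{\mu_0}\|\bs H_e\|$. A standard corollary of Brouwer's theorem (cf.~\cite{Girault1986}) then produces a zero $\phi_h$ of $P$, and testing \eqref{eq:fem-1} with $\tau_h=\phi_h$ exactly as in Theorem~\ref{well-phi} delivers the a priori bound \eqref{eq:nabla-g}.

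The key new step is the pointwise estimate. Combining \eqref{eq:nabla-g} with the inverse inequality \eqref{inverse} gives $\|\nabla\phi_h\|_\infty\le C_{inv}h^{-d/2}\|\nabla\phi_h\|\le \tfrac{C_{inv}}{\mu_0}h^{-d/2}\|\bs H_e\|$. The hypothesis \eqref{eq:ass-He} is calibrated precisely so that the right-hand side falls strictly below $1/C_\alpha$, which is exactly \eqref{eq:phi-inf}. With this bound in hand, uniqueness is obtained verbatim from the continuous argument: given two solutions $\phi_h,\tilde\phi_h$, subtracting their equations and testing with $\tilde\phi_h-\phi_h$ leads, via the Lipschitz bound $|\alpha'|\le C_\alpha$ of Lemma~\ref{lem:alphaH}(i), to the chain $\|\nabla(\tilde\phi_h-\phi_h)\|^2\le a(\tilde\phi_h;\tilde\phi_h-\phi_h,\tilde\phi_h-\phi_h)\le C_\alpha\|\nabla\phi_h\|_\infty\|\nabla(\tilde\phi_h-\phi_h)\|^2$, so that $C_\alpha\|\nabla\phi_h\|_\infty<1$ forces $\tilde\phi_h=\phi_h$.

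I expect the only genuinely delicate point to be the mesh-dependent character of the smallness condition rather than any single technical estimate. The inverse inequality unavoidably introduces the factor $h^{-d/2}$, so the pointwise control is lost as $h\to 0$ unless $\|\bs H_e\|$ shrinks like $h^{d/2}$; care is needed to track that the constants appearing are exactly $C_{inv}$ and $C_\alpha$ as written in \eqref{eq:ass-He}, and that the bound \eqref{eq:phi-inf} holds for \emph{every} solution (which it does, since \eqref{eq:nabla-g} is automatic). By contrast, the existence and a priori estimate are routine adaptations of the continuous case, and the monotonicity estimate underlying uniqueness is structurally identical to the one already carried out for \eqref{eq:weak-maxwell-1}.
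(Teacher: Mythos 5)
Your proposal is correct and follows essentially the same route as the paper: a Brouwer-type corollary from Girault--Raviart for existence (your map $P$ on $S_h$ is just the paper's operator $\Phi:S_h\to S_h'$ composed with the Riesz isometry that the paper also invokes), the a priori bound by testing with $\phi_h$, the pointwise bound $\|\nabla\phi_h\|_\infty<1/C_\alpha$ from \eqref{eq:nabla-g}, \eqref{inverse} and \eqref{eq:ass-He}, and uniqueness repeated verbatim from Theorem~\ref{well-phi}. Your observation that \eqref{eq:nabla-g}, hence \eqref{eq:phi-inf}, holds for \emph{every} discrete solution is exactly the point that makes the uniqueness argument close, and it matches the paper's reasoning.
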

\begin{proof}
Define an operator $A:~S_h \rightarrow S_h^\prime$ by
$$
\langle A(\phi_h),\tau_h\rangle = a(\phi_h;\phi_h,\tau_h)\qquad\forall~~\phi_h,~\tau_h\in S_h,
$$
and  define $\Phi:~S_h \rightarrow S_h^\prime$ by 
$$
\Phi(\phi_h) = A(\phi_h) + Q_h^\prime g\qquad\forall~~\phi_h \in S_h,
$$
where $Q_h^\prime:~(H_0^1(\Omega))^\prime \rightarrow S_h^\prime$ is given by
\begin{equation}\label{eq:Q_h}
\langle Q_h^\prime g,\tau_h \rangle = \langle g,\tau_h\rangle=-\frac{1}{\mu_0}(\bs H_e, \nabla \tau_h) \qquad \forall~~\tau_h \in S_h.
\end{equation}
Thus, \eqref{eq:fem-1} is equivalent to the equation
$$
\Phi(\phi_h) = 0.
$$
Lemma \ref{lem:cocervity} implies
$$
\langle A(\phi_h),\phi_h\rangle =a(\phi_h;\phi_h,\phi_h) \geq \|\nabla\phi_h\|^2\qquad\forall~~\phi_h \in S_h.
$$
The definition \eqref{eq:Q_h} and the Cauchy-Schwarz inequality %, and the Poincar\'e inequality \eqref{eq:poin} 
imply
$$
\langle Q_h^\prime g,\phi_h\rangle=\langle g ,\phi_h\rangle \leq \frac{1}{\mu_0}\|\bs H_e\|\|\nabla\phi_h\|\qquad\forall~~\phi_h \in S_h.
$$
Therefore,  we have
$$
\langle \Phi(\phi_h),\phi_h\rangle =\langle A(\phi_h) + Q_h^\prime g ,\phi_h\rangle
   \geq 0, \qquad\forall~~\phi_h \in S_h \text{ with }\|\nabla\phi_h\| = \frac{1}{\mu_0}\|\bs H_e\|.
$$
%for any  $\phi_h \in S_h$ with $\|\nabla\phi_h\|  = C_p\|g\| $,
For any given $0 \neq \phi_{h0} \in S_h$ 
and   $\epsilon >0$, denote
  $\delta: = \min\left\{ \frac{\epsilon}{2C_1},~\frac{\epsilon}{2C_\alpha\|\nabla\phi_{h0}\|_\infty}\right\} $. Then for any $\phi_h \in S_h$ satisfying $\|\nabla (\phi_h - \phi_{h0})\| < \delta$, by  Lemmas   \ref{lem:alphaH} and  \ref{lem:cocervity} we have
\begin{align*}
&\|\Phi(\phi_h) - \Phi(\phi_{h0})\|_{S_h^\prime}   = \|A(\phi_h) - A(\phi_{h0})\|_{S_h^\prime} \\
= & \sup\limits_{\tau_h\in S_h} \frac{\langle A(\phi_h) - A(\phi_{h0}),\tau_h \rangle}{\|\nabla\tau_h\|} \\
 = &\sup\limits_{\tau_h \in S_h} \frac{a(\phi_h;\phi_h,\tau_h) - a(\phi_{h0};\phi_{h0},\tau_h)}{\|\nabla\tau_h\|} \\
 =& \sup\limits_{\tau_h \in S_h} \frac{a(\phi_h;\phi_h,\tau_h) - a(\phi_h;\phi_{h0},\tau_h) + a(\phi_h;\phi_{h0},\tau_h) - a(\phi_{h0};\phi_{h0},\tau_h)}{\|\nabla\tau_h\|} \\
 \leq & C_1\|\nabla(\phi_h - \phi_{h0})\| + C_\alpha\|\nabla\phi_{h0}\|_\infty\|\nabla(\phi_h - \phi_{h0})\| \\
 < & \epsilon.
\end{align*}
This means that $\Phi$ is continuous on the set $S_h\setminus\{0\}$.  Notice that Lemma \ref{lem:cocervity} implies $\Phi$ is continuous at the point $0$. Hence, $\Phi$ is continuous on $S_h$. 

On the other hand, the Riesz representation theorem implies that the spaces $S_h$ and $S_h^\prime$ are isometry.  As a result, by \cite[Page 279, Corollary 1.1]{Girault1986} equation \eqref{eq:fem-1} admits at least one solution $\phi_h \in S_h$ satisfiying 
\eqref{eq:nabla-g}.%$\|\nabla\phi_h\| \leq C_p\|g\|$. 
%By \cite[Page 279, Corollary 1.1]{Girault1986}, equation \eqref{eq:fem-1} has at least one solution $\phi_h \in S_h$ satisfies $\|\nabla\phi_h\| \leq C_p\|g\|$. 

If $\bs H_e$ satisfies assumption \eqref{eq:ass-He},
%or, equivalently,  
%$$\|g\| = \frac{1}{\mu_0}  \|\div\bs H_e\|\leq   C_{inv}^{-1}C_\alpha^{-1} C_p^{-1} h^{d/2},$$
then from \eqref{eq:nabla-g} and the inverse inequality \eqref{inverse} we get%\eqref{eq:fem-1} has solutions $\phi_h \in S_h$ satisfy 
%$$\|\nabla\phi_h\| \leq C_p\|g\| \leq \tilde C_{inv}C_\alpha^{-1}   h^{d/2}.$$
$$
  \|\nabla\phi_h\|_\infty \leq    C_{inv}  h^{-d/2}  \|\nabla\phi_h\| < 1/C_\alpha ,
$$
i.e. \eqref{eq:phi-inf} holds. Thus,
%\end{equation}
% The inverse inequality \eqref{inverse}  implies
%\begin{equation}\label{eq:phi-inf}
%C_\alpha \|\nabla\phi_h\|_\infty \leq \tilde C_{inv}^{-1} h^{-d/2}C_\alpha \|\nabla\phi_h\| < 1.
%\end{equation}
by following the same line as in the proof of the uniqueness of the weak solution $\phi$   in Theorem \ref{well-phi}, we can easily obtain the uniqueness of the discrete solution $\phi_h$.
%in thing left is to show the uniqueness of the solution. In fact,  let $\tilde\phi_h  \in S_h$ also be a solution of \eqref{eq:fem-1}, then we get
%%, with 
%%$$C_\alpha \|\nabla\phi_1\|_\infty<1 \quad \text{and} \quad  C_\alpha \|\nabla\phi_2\|_\infty<1$$
%%due to  \eqref{eq:phi-inf}.  
%%we will show that $\phi_1 = \phi_2$. In fact, 
%  $$
%a(\tilde\phi_h;\tilde\phi_h,\tau_h) = a(\phi_h;\phi_h,\tau_h)\qquad\forall~~\tau_h \in S_h,
%$$
%which yields 
%\begin{align*}
%a(\tilde\phi_h;\tilde\phi_h-\phi_h,\tau_h) = \int_\Omega \big(\alpha(|\nabla\phi_h|) - \alpha(|\nabla\tilde\phi_h|) \big)\nabla\phi_h\cdot\nabla\tau_h\dd\Omega\qquad\forall~~\tau_h \in S_h.
%\end{align*}
%Taking $\tau_h = \tilde\phi_h - \phi_h$ in the above equation and using Lemma \ref{lem:alphaH}  give
%\begin{align*}
%\|\nabla(\tilde\phi_h - \phi_h)\|^2 &\leq a(\tilde\phi_h;\tilde\phi_h-\phi_h,\tilde\phi_h - \phi_h)  \\
%&=  \int_\Omega \big (\alpha(|\nabla\phi_h) - \alpha(|\nabla\tilde\phi_h|) \big)\nabla\phi_h\cdot\nabla(\tilde\phi_h - \phi_h)\dd\Omega \\
%& \leq C_\alpha \|\nabla\phi_h\|_\infty \|\nabla(\tilde\phi_h - \phi_h)\|^2,
%\end{align*}
%which, together with  \eqref{eq:phi-inf}, implies $\tilde\phi_h = \phi_h$. This completes the proof.
\end{proof}

\begin{theorem}\label{Hh-well}
For  any given $\phi_h \in S_h$, equation \eqref{eq:fem-2} admits a  unique solution $\bs H_h = \nabla\phi_h \in \bs U_h$, which   means  $
  \curl \bs H_h = 0.
  $
\end{theorem}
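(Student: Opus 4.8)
The plan is to mirror the argument used for the continuous counterpart, Theorem \ref{H-well}, replacing the continuous de Rham complex by the discrete exact sequence supplied in assumption (A2). The crucial structural fact I would exploit first is that the commutativity/exactness in \eqref{eq:exact-seq} gives $\grad S_h \subset \bs U_h$, so that for any $\phi_h \in S_h$ the element $\nabla\phi_h$ genuinely lies in $\bs U_h$ and is therefore an admissible candidate in \eqref{eq:fem-2}. This is the discrete analogue of the statement ``$\nabla\phi\in\bs H_0(\curl)$'' invoked in the proof of Theorem \ref{H-well}, and it is the one ingredient that makes the finite element problem well posed in the same clean way as the continuous one.

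With that in hand, existence follows by direct substitution: setting $\bs H_h = \nabla\phi_h$, the left-hand side of \eqref{eq:fem-2} reads $(\nabla\phi_h,\bs C_h) - (\nabla\phi_h,\bs C_h) = 0$ for every $\bs C_h \in \bs U_h$, so $\nabla\phi_h$ solves the equation. For uniqueness I would rewrite \eqref{eq:fem-2} as $(\bs H_h - \nabla\phi_h,\bs C_h) = 0$ for all $\bs C_h \in \bs U_h$. Since $\bs H_h \in \bs U_h$ by definition and $\nabla\phi_h \in \bs U_h$ by (A2), the difference $\bs H_h - \nabla\phi_h$ itself lies in $\bs U_h$; choosing $\bs C_h = \bs H_h - \nabla\phi_h$ then yields $\|\bs H_h - \nabla\phi_h\|^2 = 0$, whence $\bs H_h = \nabla\phi_h$.

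It remains to record the nonconductive property. Since $\phi_h \in S_h \subset H_0^1(\Omega)$ and $\bs H_h = \nabla\phi_h$ is a gradient, the identity $\curl(\nabla\phi_h) = 0$ gives $\curl\bs H_h = 0$ exactly, which is the conclusion claimed. I do not expect a genuine obstacle here: the single point that requires care, rather than difficulty, is the verification that $\nabla\phi_h \in \bs U_h$, and this is precisely what the discrete exact sequence in (A2) provides. Without that inclusion the function $\nabla\phi_h$ would not be a legitimate trial and test element, and both the substitution in the existence step and the choice $\bs C_h = \bs H_h - \nabla\phi_h$ in the uniqueness step would fail.
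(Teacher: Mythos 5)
Your proposal is correct and follows essentially the same argument as the paper: assumption (A2) guarantees $\nabla\phi_h \in \bs U_h$, and then the test-function choice $\bs C_h = \bs H_h - \nabla\phi_h$ forces $\bs H_h = \nabla\phi_h$, whence $\curl\bs H_h = 0$. The paper merely compresses your existence and uniqueness steps into the single observation that this choice of $\bs C_h$ yields the result.
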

\begin{proof}
For any given $\phi_h \in S_h$,   assumption (A2) %\eqref{eq:exact-seq} 
on the finite element spaces implies that $\nabla\phi_h \in \bs U_h$. Thus, taking $\bs C_h = \bs H_h - \nabla\phi_h \in \bs U_h$ in \eqref{eq:fem-2} yields
$
\bs H_h = \nabla\phi_h.
$
% The de Rham complex \cite{Arnold;Falk;Winther2006} on $2D$ and $3D$ domain implies that $\nabla\phi\in \bs H_0(\curl)$. Thus, taking $\bs C = \bs H - \nabla\phi \in \bs H_0(\curl)$ in \eqref{eq:weak-maxwell-2} yields 
%%$$
%%\|\bs H - \nabla\phi\|^2 = 0
%%$$
%%which means
%$
%\bs H = \nabla\phi. % \qquad\text{in }L^2(\Omega).
%$
\end{proof}

The following well-posedness results  for the finite element scheme \eqref{eq:fem-3} - \eqref{eq:fem-4} of  the Navier-Stokes equations are standard (cf. \cite{Girault1986}).

\begin{theorem}\label{lem:solu3-4}
The finite element scheme  
%The Navier-Stokes system 
\eqref{eq:fem-3} - \eqref{eq:fem-4} has at least one solution $(\bs u_h,\tilde p_h)\in \bs V_h \times W_h$, and there holds
$$
\|\nabla\bs u_h\| \leq   \frac 1 \eta \|\bs f\|_{-1}.
$$  Further more, if % the condition  
\begin{equation}\label{uni-NS}
(\tilde{\mathcal N}/\eta^2)\|\bs f\|_{-1} < 1\quad\text{with}\quad \tilde{\mathcal N} = \sup\limits_{\bs w_h,\bs u_h,\bs v_h \in \bs V_h}\frac{b(\bs w_h;\bs u_h,\bs v_h)}{\|\nabla \bs w_h\|\|\nabla \bs u_h\|\|\nabla \bs v_h\|},
\end{equation}
% holds, 
then the finite element solution $(\bs u_h,\tilde p_h)$ is  unique.
\end{theorem}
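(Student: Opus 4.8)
The plan is to follow the classical Galerkin/fixed-point strategy for the discrete Navier--Stokes equations, exactly as in the continuous case cited just above. First I would pass to the discretely divergence-free subspace
$$
\bs V_{h,0} := \{\bs v_h \in \bs V_h :~(\nabla\cdot\bs v_h, q_h) = 0 \ \ \forall~q_h \in W_h\},
$$
so that \eqref{eq:fem-3}--\eqref{eq:fem-4} reduces to finding $\bs u_h \in \bs V_{h,0}$ with
$$
\eta(\nabla\bs u_h, \nabla\bs v_h) + b(\bs u_h; \bs u_h, \bs v_h) = (\bs f, \bs v_h) \quad \forall~\bs v_h \in \bs V_{h,0}.
$$
On the finite-dimensional Hilbert space $\bs V_{h,0}$ endowed with the inner product $(\bs u_h,\bs v_h)_* := (\nabla\bs u_h,\nabla\bs v_h)$, I would define $P:\bs V_{h,0}\to\bs V_{h,0}$ by Riesz representation,
$$
(P(\bs u_h), \bs v_h)_* = \eta(\nabla\bs u_h, \nabla\bs v_h) + b(\bs u_h; \bs u_h, \bs v_h) - (\bs f, \bs v_h),
$$
which is continuous since $b$ is polynomial and all norms on $\bs V_{h,0}$ are equivalent.

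The coercivity ingredient is the skew-symmetry \eqref{eq:b-coer}, giving $b(\bs u_h;\bs u_h,\bs u_h)=0$, so
$$
(P(\bs u_h), \bs u_h)_* = \eta\|\nabla\bs u_h\|^2 - (\bs f, \bs u_h) \ge \|\nabla\bs u_h\|\big(\eta\|\nabla\bs u_h\| - \|\bs f\|_{-1}\big).
$$
Thus $(P(\bs u_h),\bs u_h)_*\ge 0$ on the sphere $\|\nabla\bs u_h\|=\eta^{-1}\|\bs f\|_{-1}$, and the Brouwer-type argument of \cite[Page 279, Corollary 1.1]{Girault1986} (already used in Theorem \ref{lem:solu1-2}) produces a zero $\bs u_h$ of $P$ in the corresponding ball, giving at once the bound $\|\nabla\bs u_h\|\le\eta^{-1}\|\bs f\|_{-1}$. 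To recover the pressure I would observe that the functional $\bs v_h\mapsto (\bs f,\bs v_h)-\eta(\nabla\bs u_h,\nabla\bs v_h)-b(\bs u_h;\bs u_h,\bs v_h)$ vanishes on $\bs V_{h,0}$; the inf-sup condition (A1) then delivers a unique $\tilde p_h\in W_h$ for which \eqref{eq:fem-3} holds over all $\bs v_h\in\bs V_h$.

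For uniqueness under \eqref{uni-NS} I would take two solutions, subtract the momentum equations, and test the difference $\bs w_h:=\bs u_h^1-\bs u_h^2\in\bs V_{h,0}$ against itself. Trilinearity gives
$$
b(\bs u_h^1;\bs u_h^1,\bs w_h)-b(\bs u_h^2;\bs u_h^2,\bs w_h)=b(\bs u_h^1;\bs w_h,\bs w_h)+b(\bs w_h;\bs u_h^2,\bs w_h),
$$
the first term vanishing by \eqref{eq:b-coer}; combining this with the a priori bound on $\bs u_h^2$ and the definition of $\tilde{\mathcal N}$ in \eqref{uni-NS} yields
$$
\Big(\eta-\tfrac{\tilde{\mathcal N}}{\eta}\|\bs f\|_{-1}\Big)\|\nabla\bs w_h\|^2\le 0.
$$
The smallness assumption \eqref{uni-NS} makes the prefactor strictly positive, forcing $\bs w_h=0$ and hence $\bs u_h^1=\bs u_h^2$; equality of the pressures then follows again from (A1). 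The only genuinely delicate points are the passage between $\bs V_h$ and $\bs V_{h,0}$, which rests on the inf-sup condition (A1), and the trilinear bookkeeping in the uniqueness estimate; once the skew-symmetry \eqref{eq:b-coer} is invoked, everything else is routine.
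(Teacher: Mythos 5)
Your proof is correct and is exactly the standard argument the paper relies on: the paper gives no proof of this theorem at all, declaring the result standard with a citation to \cite{Girault1986}, and your route --- Brouwer fixed-point existence on the discretely divergence-free subspace $\bs V_{h,0}$ using the skew-symmetry \eqref{eq:b-coer} for coercivity, pressure recovery via the inf-sup condition (A1), and the contraction-type uniqueness estimate under \eqref{uni-NS} --- is precisely the classical Girault--Raviart proof being invoked. The details check out, including the use of $(\bs f,\bs u_h)\le\|\bs f\|_{-1}\|\nabla\bs u_h\|$, which is legitimate here since the paper restricts the analysis to conforming velocity spaces $\bs V_h\subset\bs H_0^1(\Omega)$.
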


\subsection{Error analysis}
In this subsection, we will give   some error estimates   for the finite element scheme \eqref{eq:fem-1} - \eqref{eq:fem-4},  under some rational assumptions.

%Let $\pi_h^v:~\bs H_0^1(\Omega) \cap (C^0(\Omega))^d\rightarrow \bs V_h$ be the classical interpolation operator and $Q_h:~L^2(\Omega)\rightarrow W_h$ be the $L^2$ orthogonal projection operator. The definition of the finite element spaces implies the following properties of the interpolation operators $\pi_h^v$, $\pi_h^s$ and $\pi_h^u$, and the $L^2$ projection operator $Q_h$:
%\begin{align}
%& \|\bs v - \pi_h^v\bs v\| + h\|\nabla(\bs v - \pi_h^v\bs v)\|\leq C_v h^{l+2} \|\bs v\|_{l+2} & \forall\bs v \in \bs H^{l+2}(\Omega),\\
%& \|\tau - \pi_h^s\tau\| + h\|\nabla(\tau - \pi_h^s\tau)\| \leq C_s h^{l+2}\|\tau\|{l+2} & \forall~\tau \in H^{l+2}(\Omega),\\
%& \|\bs H - \pi_h^c\bs H\| + \|\curl(\bs H - \pi_h^c\bs H)\| \leq C_u h^{l+1}\|\bs H\|_{l+1}
%\end{align}

 Firstly, we have the following error estimate for the discrete solution $\phi_h$.
\begin{theorem}\label{lem:sigma-phi}
Let $\phi \in  H_{0}^{1}(\Omega)\cap  H^{l+2}(\Omega)$ be the solution of \eqref{eq:weak-maxwell-1}, and let $\phi_{h} \in S_{h}$ be the solution of \eqref{eq:fem-1}. Assume that $\phi$ satisfies \eqref{eq:ass-phi}, i.e. $
\|\nabla\phi\|_{L^\infty} < 1/C_\alpha,
$ then there exists a positive constant $C_s$, independent of $h$, such that
%$$
%C_\alpha\|\nabla\phi\|_{L^\infty} < 1.
%$$
%Wwe have the error estimate
 $$
\|\nabla(\phi - \phi_{h})\|  \leq  \frac{C_s(1+C_1) }{1 - C_\alpha\|\nabla\phi\|_{L^\infty}}  h^{l+1} \|\phi\|_{l+2}.
 $$
\end{theorem}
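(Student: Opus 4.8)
The plan is to run a standard Galerkin-orthogonality argument, using the uniform coercivity and continuity of $a(w;\cdot,\cdot)$ from Lemma \ref{lem:cocervity}, the Lipschitz bound $|\alpha'|\le C_\alpha$ from Lemma \ref{lem:alphaH}(i), and a Lagrange interpolation error estimate. Since $S_h\subset H_0^1(\Omega)$ is conforming, testing both \eqref{eq:weak-maxwell-1} and \eqref{eq:fem-1} against $\tau_h\in S_h$ gives the orthogonality relation $a(\phi;\phi,\tau_h)=a(\phi_h;\phi_h,\tau_h)$ for all $\tau_h\in S_h$. I would then split the error through the interpolant $\pi_h^s\phi\in S_h$, writing $\phi-\phi_h=\rho+\theta$ with $\rho:=\phi-\pi_h^s\phi$ and $\theta:=\pi_h^s\phi-\phi_h\in S_h$, so that by the triangle inequality it suffices to bound $\|\nabla\theta\|$ in terms of $\|\nabla\rho\|$.

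To bound $\theta$, first apply coercivity: $\|\nabla\theta\|^2\le a(\phi_h;\theta,\theta)$. I would expand $a(\phi_h;\theta,\theta)=a(\phi_h;\pi_h^s\phi,\theta)-a(\phi_h;\phi_h,\theta)$ and use the orthogonality relation together with the continuous equation (legitimate because $\theta\in S_h\subset H_0^1(\Omega)$) to replace $a(\phi_h;\phi_h,\theta)$ by $a(\phi;\phi,\theta)$. Adding and subtracting $a(\phi_h;\phi,\theta)$ splits the right-hand side into a linear piece $a(\phi_h;\pi_h^s\phi-\phi,\theta)=-a(\phi_h;\rho,\theta)$, which continuity bounds by $C_1\|\nabla\rho\|\|\nabla\theta\|$, and a nonlinear piece $\int_\Omega\big(\alpha(|\nabla\phi_h|)-\alpha(|\nabla\phi|)\big)\nabla\phi\cdot\nabla\theta\,\dd\Omega$.

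The nonlinear piece is the main obstacle, and it is where hypothesis \eqref{eq:ass-phi} is used. Via the Lipschitz estimate $|\alpha(|\nabla\phi_h|)-\alpha(|\nabla\phi|)|\le C_\alpha|\nabla(\phi-\phi_h)|$ from Lemma \ref{lem:alphaH}(i) and pulling out $\|\nabla\phi\|_{L^\infty}$, I would bound this piece by $C_\alpha\|\nabla\phi\|_{L^\infty}\|\nabla(\phi-\phi_h)\|\|\nabla\theta\|$, and then by $\|\nabla(\phi-\phi_h)\|\le\|\nabla\rho\|+\|\nabla\theta\|$. Collecting terms and dividing by $\|\nabla\theta\|$ yields $(1-C_\alpha\|\nabla\phi\|_{L^\infty})\|\nabla\theta\|\le(C_1+C_\alpha\|\nabla\phi\|_{L^\infty})\|\nabla\rho\|$. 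Assumption \eqref{eq:ass-phi} guarantees $1-C_\alpha\|\nabla\phi\|_{L^\infty}>0$, so the $\|\nabla\theta\|$-contribution of the nonlinear term can be absorbed to the left, and this absorption is exactly what produces the factor $1/(1-C_\alpha\|\nabla\phi\|_{L^\infty})$ in the target estimate.

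Finally I would combine $\|\nabla(\phi-\phi_h)\|\le\|\nabla\rho\|+\|\nabla\theta\|$ with the bound just obtained, using $C_\alpha\|\nabla\phi\|_{L^\infty}<1$ to dominate the numerator $C_1+C_\alpha\|\nabla\phi\|_{L^\infty}$ by $1+C_1$ up to an absolute constant. Applying the standard interpolation estimate $\|\nabla\rho\|=\|\nabla(\phi-\pi_h^s\phi)\|\le C\,h^{l+1}\|\phi\|_{l+2}$, which holds because $\mathbb P_{l+1}(K)\subset S_h|_K$ and $\phi\in H^{l+2}(\Omega)$, and absorbing all $h$-independent constants into $C_s$, gives the asserted bound. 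The only genuinely nonroutine point is the absorption argument in the previous paragraph; everything else is bookkeeping with the already-established coercivity, continuity, and Lipschitz properties.
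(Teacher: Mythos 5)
Your proposal is correct and follows essentially the same route as the paper's own proof: the same splitting through the interpolant, the same use of coercivity and Galerkin orthogonality to isolate the linear piece $-a(\phi_h;\rho,\theta)$ and the nonlinear piece $\int_\Omega\big(\alpha(|\nabla\phi_h|)-\alpha(|\nabla\phi|)\big)\nabla\phi\cdot\nabla\theta\,\dd\Omega$, and the same absorption argument based on \eqref{eq:ass-phi}. The only cosmetic difference is in the final bookkeeping, where the paper's exact algebra gives the constant $\frac{C_s(1+C_1)}{1-C_\alpha\|\nabla\phi\|_{L^\infty}}$ directly, while your ``up to an absolute constant'' step yields the same form with a slightly larger $C_s$, which is harmless since the theorem only asserts existence of an $h$-independent constant.
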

\begin{proof} 
Let   $\phi_h^* \in S_h$ be  the classical interpolation of $\phi$, satisfying the estimate
\begin{equation}\label{phi-interpolation}
\|\nabla(\phi - \phi_h^*)\|\leq C_sh^{l+1}\|\phi\|_{l+2},
\end{equation}
where $C_s>0$ is a  constant independent of $h$ and $\phi$. 

In view of the inequality
$$
\|\nabla(\phi - \phi_h)\| \leq \|\nabla(\phi - \phi_h^*)\| + \|\nabla(\phi_h^* - \phi_h)\|,
$$
it suffices to estimate the term $ \|\nabla(\phi_h^* - \phi_h)\|$. 
To this end, we apply 
Lemma \ref{lem:cocervity} and the relation
$$
a(\phi;\phi,\tau_h) = a(\phi_h;\phi_h,\tau_h),\quad \forall \ \tau_h \in S_h,
$$
to get
\begin{eqnarray*}
%\begin{split}
 \|\nabla(\phi_h^* - \phi_h)\|^2 & \leq& a(\phi_h;\phi_h^* - \phi_h,\phi_h^* - \phi_h) \\
 &= &a(\phi_h;\phi_h^* - \phi,\phi_h^* - \phi_h) + a(\phi_h;\phi - \phi_h,\phi_h^* - \phi_h).
 %&= &a(\phi_h;\phi_h^* - \phi,\phi_h^* - \phi_h) + a(\phi_h;\phi,\phi_h^* - \phi_h) - a(\phi;\phi,\phi_h^* - \phi_h)\\
%\end{split}
%\qquad\qquad\forall~~v \in H_0^1(\Omega).
\end{eqnarray*}
Since
$$
|a(\phi_h;\phi_h^* - \phi,\phi_h^* - \phi_h)| \leq C_1\|\nabla(\phi - \phi_h^*)\|\|\nabla(\phi_h^* - \phi_h)\| 
$$
and
\begin{align*}
a(\phi_h;\phi - \phi_h,\phi_h^* - \phi_h)  =&  a(\phi_h;\phi,\phi_h^* - \phi_h) - a(\phi;\phi,\phi_h^* - \phi_h) \\
 =& \left( \big(\alpha(|\nabla \phi_h|) - \alpha(|\nabla\phi|)\big)\nabla\phi, \nabla(\phi_h^* - \phi_h)\right) \\
 \leq &C_\alpha\|\nabla\phi\|_{L^\infty}\|\nabla(\phi - \phi_h)\|\|\nabla(\phi_h^* - \phi_h)\| \\
\leq & C_\alpha\|\nabla\phi\|_{L^\infty}\big(\|\nabla(\phi - \phi_h^*)\| + \|\nabla(\phi_h^* - \phi_h)\|\big)\|\nabla(\phi_h^* - \phi_h)\|,
\end{align*}
we have 
\begin{eqnarray*}
%\begin{split}
 \|\nabla(\phi_h^* - \phi_h)\|   \leq  C_1\|\nabla(\phi - \phi_h^*)\|+C_\alpha\|\nabla\phi\|_{L^\infty}\big(\|\nabla(\phi - \phi_h^*)\| + \|\nabla(\phi_h^* - \phi_h)\|\big),
 \end{eqnarray*}
 which, together with \eqref{eq:ass-phi}, implies
\begin{eqnarray*}
%\begin{split}
 \|\nabla(\phi_h^* - \phi_h)\|   \leq  \frac{C_1 +C_\alpha\|\nabla\phi\|_{L^\infty}}{1 - C_\alpha\|\nabla\phi\|_{L^\infty}}  \|\nabla(\phi - \phi_h^*)\|. %+ \|\nabla(\phi_h^* - \phi_h)\|\big),
 \end{eqnarray*}
This inequality, together with  \eqref{phi-interpolation}, indicates the desired conclusion. \end{proof}

In light of Theorems \ref{H-well} and \ref{Hh-well}, we know that
%
%Using the exact sequence \eqref{eq:exact-seq} and equations \eqref{eq:weak-maxwell-2} and \eqref{eq:fem-2}, we obtain
$$
\bs H = \nabla\phi,\quad\bs H_{h} = \nabla\phi_{h} \quad \text{and} \quad \curl \bs H=\curl \bs H_{h} =0.
$$
Thus,  by Theorem \ref{lem:sigma-phi}, we immediately get the following error estimate for the discrete solution $\bs H_h$ of \eqref{eq:fem-2}.
\begin{theorem}\label{lem:H}
%Let $\phi \in  H_{0}^{1}(\Omega)\cap H^{l+2}(\Omega)$ be the solution of \eqref{eq:weak-maxwell-1}, 
Let $\bs H \in \bs H_{0}(\curl)$ be the solution of \eqref{eq:weak-maxwell-2}, and let  $\bs H_{h}\in \bs U_{h}$ be the solution of \eqref{eq:fem-2}. Then, under the same conditions as in Theorem \ref{lem:sigma-phi},  %Assume that $\phi$ satisfies \eqref{eq:ass-phi}, i.e. $
%\|\nabla\phi\|_{L^\infty} < 1/C_\alpha,
%$ then 
there holds
$$
\|\bs H - \bs H_{h}\|+\|\curl(\bs H - \bs H_{h})\| \leq   \frac{C_s(1+C_1) }{1 - C_\alpha\|\nabla\phi\|_{L^\infty}}  h^{l+1} \|\phi\|_{l+2}.
$$
%where $\|\bs H\|_c^2 = \|\bs H\|^2 + \|\curl\bs H\|^2$.
\end{theorem}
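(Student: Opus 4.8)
The plan is to reduce the entire estimate to Theorem \ref{lem:sigma-phi}, which already carries all the analytic content. First I would invoke Theorems \ref{H-well} and \ref{Hh-well}, which supply the \emph{exact} representations $\bs H = \nabla\phi$ and $\bs H_h = \nabla\phi_h$, together with the identities $\curl\bs H = \curl\bs H_h = 0$. It is worth emphasizing that these are genuine equalities, not approximations: the edge-element equation \eqref{eq:fem-2} reproduces $\nabla\phi_h$ exactly because assumption (A2) guarantees $\grad S_h \subset \bs U_h$, so the discrete magnetic field inherits the curl-free property at the fully discrete level.

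Next I would split the combined norm on the left-hand side into its two pieces and treat them separately. For the $\bs L^2$ piece, substituting the representations gives
$$
\|\bs H - \bs H_h\| = \|\nabla\phi - \nabla\phi_h\| = \|\nabla(\phi - \phi_h)\|.
$$
For the curl piece, since both curls vanish identically we have $\curl(\bs H - \bs H_h) = \curl\bs H - \curl\bs H_h = 0$, and therefore $\|\curl(\bs H - \bs H_h)\| = 0$. Consequently the whole left-hand side collapses to exactly $\|\nabla(\phi - \phi_h)\|$, with no residual contribution from the curl.

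Finally I would apply the error bound of Theorem \ref{lem:sigma-phi}, whose hypotheses (namely $\phi \in H_0^1(\Omega)\cap H^{l+2}(\Omega)$ and the smallness condition \eqref{eq:ass-phi}) are precisely the ``same conditions'' invoked in the present statement, to conclude
$$
\|\bs H - \bs H_h\| + \|\curl(\bs H - \bs H_h)\| = \|\nabla(\phi - \phi_h)\| \leq \frac{C_s(1+C_1)}{1 - C_\alpha\|\nabla\phi\|_{L^\infty}}\, h^{l+1}\|\phi\|_{l+2}.
$$

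There is no substantial obstacle here: the result is an immediate corollary of the two well-posedness theorems and Theorem \ref{lem:sigma-phi}. The only point requiring a moment's care is confirming that the curl term vanishes \emph{exactly} rather than merely being bounded—this is what lets the $\bs H(\curl)$-norm error be controlled at the same optimal rate as the gradient error—and that vanishing rests entirely on the commuting-diagram structure (A2). All of the genuine difficulty (the nonlinearity of $\alpha$, the coercivity of $a(w;\cdot,\cdot)$, and the interpolation estimate \eqref{phi-interpolation}) was already absorbed into the proof of Theorem \ref{lem:sigma-phi}.
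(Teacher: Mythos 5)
Your proof is correct and follows exactly the paper's argument: the paper likewise invokes Theorems \ref{H-well} and \ref{Hh-well} to write $\bs H = \nabla\phi$, $\bs H_h = \nabla\phi_h$ with $\curl\bs H = \curl\bs H_h = 0$, so the curl term vanishes identically and the estimate reduces to Theorem \ref{lem:sigma-phi}. No gaps; your extra remark that the exactness rests on assumption (A2) (i.e.\ $\grad S_h \subset \bs U_h$) is precisely the mechanism the paper uses in Theorem \ref{Hh-well}.
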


The following error estimate  for the Navier-Stokes system \eqref{eq:fem-3}-\eqref{eq:fem-4} is standard (cf. \cite{Girault1986}).
\begin{theorem}\label{lem:up}
Let $\bs u \in \bs H_{0}^{1}(\Omega) \cap\bs H^{l+2}(\Omega)$ and $\tilde p \in L_{0}^{2}(\Omega)\cap H^{l+1}(\Omega)$ be the solution of the Navier-Stokes system \eqref{eq:weak-NS-1}-\eqref{eq:weak-NS-2}, and let $\bs u_{h} \in \bs V_{h}$ and $\tilde p_{h} \in W_{h}$ be the solution of \eqref{eq:fem-3}-\eqref{eq:fem-4}. There exists a positive constant $C$, independent of $h$,  such that
$$
\|\bs u - \bs u_{h}\|_{1} + \|\tilde p - \tilde p_{h}\| \leq C h^{l+1}(\|\bs u\|_{l+2} + \|\tilde p\|_{l+1}).
$$
\end{theorem}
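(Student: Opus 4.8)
The plan is to follow the classical a priori error analysis for conforming mixed approximations of the Navier--Stokes equations, with the skew-symmetric form $b$ playing the role of the discrete convection. The first step is to restore Galerkin consistency. Since the exact velocity satisfies $\div\bs u=0$, relation \eqref{eq:b-} gives $b(\bs u;\bs u,\bs v)=\rho((\bs u\cdot\nabla)\bs u,\bs v)$ for every $\bs v\in\bs H_0^1(\Omega)$, so the continuous momentum equation \eqref{eq:weak-NS-1} may be rewritten with $b$ in place of the raw convective term. Subtracting \eqref{eq:fem-3}, which is valid for $\bs v=\bs v_h\in\bs V_h\subset\bs H_0^1(\Omega)$, then yields for all $\bs v_h\in\bs V_h$ the error equation
\begin{equation*}
\eta(\nabla(\bs u-\bs u_h),\nabla\bs v_h)+b(\bs u;\bs u,\bs v_h)-b(\bs u_h;\bs u_h,\bs v_h)-(\tilde p-\tilde p_h,\nabla\cdot\bs v_h)=0.
\end{equation*}
I would introduce the discretely divergence-free space $\bs Z_h:=\{\bs v_h\in\bs V_h:(\nabla\cdot\bs v_h,q_h)=0\ \forall q_h\in W_h\}$ and, using the inf-sup condition \eqref{eq:inf-sup} together with $\div\bs u=0$, select an approximation $\bs u_I\in\bs Z_h$ of $\bs u$ obeying $\|\bs u-\bs u_I\|_1\leq Ch^{l+1}\|\bs u\|_{l+2}$. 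Writing $\bs\theta:=\bs u-\bs u_I$ and $\bs\xi_h:=\bs u_I-\bs u_h\in\bs Z_h$, it then suffices to bound $\|\nabla\bs\xi_h\|$.

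Second, I would test the error equation with $\bs v_h=\bs\xi_h\in\bs Z_h$ and use $\bs u-\bs u_h=\bs\theta+\bs\xi_h$ to isolate the coercive term, so that
\begin{equation*}
\eta\|\nabla\bs\xi_h\|^2=-\eta(\nabla\bs\theta,\nabla\bs\xi_h)-\big[b(\bs u;\bs u,\bs\xi_h)-b(\bs u_h;\bs u_h,\bs\xi_h)\big]+(\tilde p-q_h,\nabla\cdot\bs\xi_h),
\end{equation*}
where, because $\bs\xi_h$ is discretely divergence-free, the pressure term has been reduced to $(\tilde p-q_h,\nabla\cdot\bs\xi_h)$ for an arbitrary $q_h\in W_h$ (taken as the interpolant of $\tilde p$), contributing $O(h^{l+1})\|\tilde p\|_{l+1}\|\nabla\bs\xi_h\|$. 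By trilinearity the convective difference splits as
\begin{equation*}
b(\bs u;\bs u,\bs\xi_h)-b(\bs u_h;\bs u_h,\bs\xi_h)=b(\bs\theta;\bs u,\bs\xi_h)+b(\bs\xi_h;\bs u,\bs\xi_h)+b(\bs u_h;\bs\theta,\bs\xi_h)+b(\bs u_h;\bs\xi_h,\bs\xi_h),
\end{equation*}
whose last term vanishes by the skew-symmetry \eqref{eq:b-coer}. The two terms carrying the interpolation error $\bs\theta$ are controlled by the continuity of $b$ and the bound on $\bs\theta$, giving $O(h^{l+1})\|\nabla\bs\xi_h\|$. Collecting these estimates and dividing by $\|\nabla\bs\xi_h\|$, a triangle inequality with the bound on $\bs\theta$ yields the stated velocity estimate.

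Third, for the pressure I would return to the full error equation: for arbitrary $q_h\in W_h$ and any $\bs v_h\in\bs V_h$, the quantity $(\tilde p_h-q_h,\nabla\cdot\bs v_h)$ is expressed through the already-estimated viscous term, the convective difference, and $(\tilde p-q_h,\nabla\cdot\bs v_h)$. Dividing by $\|\bs v_h\|_1$, taking the supremum, and applying the inf-sup condition \eqref{eq:inf-sup} bounds $\|\tilde p_h-q_h\|$ by $C\big(\|\nabla(\bs u-\bs u_h)\|+h^{l+1}\|\tilde p\|_{l+1}\big)$; a final triangle inequality produces the pressure estimate.

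The main obstacle is the critical convective term $b(\bs\xi_h;\bs u,\bs\xi_h)$, which is quadratic in the unknown error $\bs\xi_h$. Via the continuity of $b$ on $\bs H_0^1(\Omega)$ and the a priori bound $\|\nabla\bs u\|\leq\eta^{-1}\|\bs f\|_{-1}$, it is bounded by a constant times $\|\nabla\bs u\|\,\|\nabla\bs\xi_h\|^2$, and the whole argument hinges on this being strictly smaller than $\eta\|\nabla\bs\xi_h\|^2$ so that it can be absorbed into the viscous coercivity. This is exactly the smallness hypothesis underlying \eqref{uni-NS} and the uniqueness regime of Theorem \ref{lem:solu3-4}; I would therefore carry out the estimate under that assumption, which the statement implicitly inherits. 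Absent such smallness, one would instead have to work on a branch of nonsingular solutions and replace the direct absorption by a fixed-point / implicit-function argument, which is the more delicate alternative.
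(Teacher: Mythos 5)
Your proposal is correct and is essentially the same approach as the paper's: the paper offers no proof of this theorem beyond the citation to Girault--Raviart \cite{Girault1986}, and your reconstruction (discretely divergence-free splitting via the inf-sup condition \eqref{eq:inf-sup}, the skew-symmetric trilinear form with \eqref{eq:b-coer} killing the quadratic term, absorption of $b(\bs\xi_h;\bs u,\bs\xi_h)$ into the viscous coercivity, and recovery of the pressure through the inf-sup condition) is precisely that classical argument. Your explicit remark that the absorption step requires the smallness hypothesis \eqref{uni-NS} --- which the theorem statement leaves implicit but which Theorem \ref{lem:solu3-4} supplies as the uniqueness regime --- is accurate and, if anything, more careful than the paper's one-line citation.
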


As shown in Remark \ref{rem:sol}, for the  the magnetization $\bs M=(\alpha(H)-1)\bs H$, %given by \eqref{eq:langevin} , 
the new variable $\psi=\beta(H)$, %given by \eqref{eq:psi} 
and the   pressure   $p=\tilde p + \mu_0 \psi$, % of the Navier-Stokes equations \eqref{eq:NS1}-\eqref{eq:NS2},     
we can obtain their approximations   $\bs M_h, \psi_h$ and $p_h$   from \eqref{eq:weak-maxwell-3-dis}, \eqref{eq:fem-5} and \eqref{eq:fem-6}, respectively. In view of Theorems \ref{lem:H} and \ref{lem:up},  we easily get the following error estimates for  these  three approximation solutions.% $\bs M_h, \psi_h$ and $p_h$.
%The relation of $\bs M$ and $\bs H$ \eqref{eq:langevin} and \eqref{eq:weak-maxwell-3-dis} imply the following error estimates of $\bs M$.
\begin{corollary}\label{lem:M}
 Assume that
%Let $\phi \in  H_{0}^{1}(\Omega)\cap  H^{l+2}(\Omega)$ be the solution of \eqref{eq:weak-maxwell-1}, 
%$\bs M $ be given by \eqref{eq:langevin}  with 
$\bs M \in  \bs H^{l+1}(\Omega)$  and  %let $\psi $ be given by \eqref{eq:psi}  with 
$\psi \in   H^{l+1}(\Omega)$.  
%, and $p$ be the pressure variable of the Navier-Stokes equations \eqref{eq:NS1}-\eqref{eq:NS2} with $p=\tilde p+\mu_0\psi$.
% be the solution of \eqref{eq:langevin}, and let $\bs M_{h}\in \bs U_{h}$ be the solution of \eqref{eq:weak-maxwell-3-dis}. Assume that $\phi$ satisfies \eqref{eq:ass-phi}, i.e. $
%\|\nabla\phi\|_{L ^\infty} < 1/C_\alpha,
%$ 
Under the same conditions as in Theorems \ref{lem:sigma-phi} and \ref{lem:up}, there exists  positive constants $C_M$ and $C_\psi$, independent of $h$,  such that
\begin{align}
&\|\bs M - \bs M_{h}\| \leq C_M h^{l+1}\left( \|\bs M\|_{l+1} +  \frac{(1+C_1) (1+ C_1 + C_\alpha\|\nabla \phi\|_\infty ) }{1 - C_\alpha\|\nabla\phi\|_{L^\infty}}  \|\phi\|_{l+2}\right),
\\
&\|\psi - \psi_{h}\| \leq \left( C_\psi \|\psi\|_{l+1} + \frac{M_sC_s(1+C_1) }{1 - C_\alpha\|\nabla\phi\|_{L^\infty}}   \|\phi\|_{l+2} \right)h^{l+1}, \label{psi-h}
\\
&\|p - p_{h}\| \leq \left( C\|\bs u\|_{l+2} + C\|\tilde p\|_{l+1} + \mu_0C_\psi \|\psi\|_{l+1} + \frac{\mu_0M_sC_s(1+C_1) }{1 - C_\alpha\|\nabla\phi\|_{L^\infty}}   \|\phi\|_{l+2} \right)h^{l+1}. \label{p-h}
\end{align}
\end{corollary}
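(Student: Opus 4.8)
The plan is to estimate the three quantities $\|\bs M - \bs M_h\|$, $\|\psi - \psi_h\|$, and $\|p - p_h\|$ separately, in each case by inserting the appropriate $L^2$-projection (or interpolant) as an intermediate term and splitting the error into an approximation part, controlled by standard interpolation estimates, and a consistency part, controlled by the Lipschitz continuity of the nonlinear functions $\alpha$ and $\beta$ together with the already-established error bound for $\bs H-\bs H_h$ from Theorem~\ref{lem:H}. Let me denote by $P_h$ the relevant projection onto $\bs U_h$ (for $\bs M$) or onto $W_h$ (for $\psi$), with optimal-order approximation properties $\|\bs M - P_h\bs M\| \leq C h^{l+1}\|\bs M\|_{l+1}$ and $\|\psi - P_h\psi\|\leq C_\psi h^{l+1}\|\psi\|_{l+1}$ under the stated regularity.

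For $\psi$, I would first write $\|\psi - \psi_h\| \leq \|\psi - P_h\psi\| + \|P_h\psi - \psi_h\|$. The second term is estimated by testing the defining equation \eqref{eq:fem-5} against $\chi_h = P_h\psi - \psi_h \in W_h$ and using that $P_h$ is the $L^2$-projection, giving $\|P_h\psi-\psi_h\|^2 = (\beta(H)-\beta(H_h), P_h\psi-\psi_h)$ after inserting $\psi=\beta(H)$. Then, since Lemma~\ref{lem:alphaH}(ii) gives $|\beta'(x)|\leq M_s$, the mean value theorem yields the pointwise Lipschitz bound $|\beta(H)-\beta(H_h)|\leq M_s|H - H_h| \leq M_s|\bs H - \bs H_h|$, so that $\|P_h\psi-\psi_h\| \leq M_s\|\bs H - \bs H_h\|$. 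Combining this with Theorem~\ref{lem:H} produces exactly the $\frac{M_s C_s(1+C_1)}{1-C_\alpha\|\nabla\phi\|_{L^\infty}}h^{l+1}\|\phi\|_{l+2}$ contribution in \eqref{psi-h}, and adding the approximation term $C_\psi h^{l+1}\|\psi\|_{l+1}$ finishes that estimate. The bound for $\bs M$ follows the same template applied to \eqref{eq:weak-maxwell-3-dis}: writing $\bs M = (\alpha(H)-1)\bs H$ and testing the difference against $\bs F_h = P_h\bs M - \bs M_h$, the consistency term becomes $\big((\alpha(H)-1)\bs H - (\alpha(H_h)-1)\bs H_h,\ \bs F_h\big)$, which I would split as $\big((\alpha(H)-\alpha(H_h))\bs H + (\alpha(H_h)-1)(\bs H - \bs H_h),\ \bs F_h\big)$; the first piece is controlled using $|\alpha(H)-\alpha(H_h)|\leq C_\alpha|\bs H-\bs H_h|$ (Lemma~\ref{lem:alphaH}(i)) together with $\|\bs H\|_{L^\infty}$, and the second using the uniform bound $\alpha(H_h)-1 \leq C_1-1$, each time invoking Theorem~\ref{lem:H} for $\|\bs H-\bs H_h\|$.

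For the pressure, I would simply use $p = \tilde p + \mu_0\psi$ and $p_h = \tilde p_h + \mu_0\psi_h$ from \eqref{eq:fem-6}, so that $\|p - p_h\| \leq \|\tilde p - \tilde p_h\| + \mu_0\|\psi - \psi_h\|$; the first term is bounded by Theorem~\ref{lem:up} and the second by the estimate \eqref{psi-h} just proved, and collecting constants yields \eqref{p-h} directly.

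The main obstacle I anticipate is handling the $L^\infty$ bound on $\bs H$ that appears when estimating the $(\alpha(H)-\alpha(H_h))\bs H$ term in the $\bs M$ argument: controlling $\|\bs H\|_{L^\infty}$ requires the assumed regularity $\phi\in H^{l+2}$ (so $\bs H = \nabla\phi$ is bounded) and a Sobolev embedding, and one must be careful that the resulting constant is absorbed cleanly into $C_M$ without picking up a spurious $h$-dependence. A secondary subtlety is that the pointwise Lipschitz estimates for $\alpha$ and $\beta$ are stated in terms of the scalar arguments $H=|\bs H|$ and $H_h=|\bs H_h|$, so I must justify $\big||\bs H|-|\bs H_h|\big|\leq |\bs H - \bs H_h|$ (the reverse triangle inequality) to pass from the scalar difference to the vector difference; this is routine but should be noted explicitly. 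Everything else reduces to combining the coercivity-free projection arguments with the already-available estimates, so no further nonlinear fixed-point or inf-sup analysis is needed here.
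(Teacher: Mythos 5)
Your proposal is correct and follows essentially the same route as the paper: insert the $L^2$ projections onto $\bs U_h$ and $W_h$, split the nonlinear differences exactly as you do (your ``test against the error'' step is just the $L^2$-stability of the projection in disguise), invoke the Lipschitz bounds of Lemma \ref{lem:alphaH} together with Theorem \ref{lem:H}, and conclude for $p$ by the triangle inequality with Theorem \ref{lem:up} and \eqref{psi-h}. Your one anticipated obstacle is not actually an issue: no Sobolev embedding is needed for $\|\bs H\|_{L^\infty}=\|\nabla\phi\|_{L^\infty}$ (which could fail for $l=0$, $d=3$), because condition \eqref{eq:ass-phi} of Theorem \ref{lem:sigma-phi}, assumed in the corollary, already gives $\|\nabla\phi\|_{L^\infty}<1/C_\alpha$, and the factor $C_\alpha\|\nabla\phi\|_\infty$ is simply carried explicitly in the stated constant.
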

\begin{proof}
Equation \eqref{eq:weak-maxwell-3-dis} implies that
$$
\bs M_h = Q_h^c((\alpha(H_h) - 1) \bs H_h),
$$
where $Q_h^c:~\bs L^2(\Omega) \rightarrow \bs U_h$ is the $L^2$ orthogonal projection operator. Using the Langevin law \eqref{eq:langevin}, we have
\begin{align*}
\bs M - \bs M_h = \bs M - Q_h^c\bs M + Q_h^c\big((\alpha(H) - 1)\bs H  -  (\alpha(H_h) - 1) \bs H_h\big).
\end{align*}
Thus, by Lemma \ref{lem:alphaH} (i), the boundedness of projection $Q_h^c$, and the relation $\bs H=\nabla \phi$, we get
\begin{align*}
\|\bs M - \bs M_h\| & \leq \|\bs M - Q_h^c\bs M\| + \|(\alpha(H) - \alpha(H_h))\bs H\| + \|(\alpha(H_h)-1)(\bs H - \bs H_h)\|\\
& \leq  \|\bs M - Q_h^c\bs M\|  + (C_\alpha\|\bs H\|_\infty + C_1 + 1)\|\bs H - \bs H_h\|\\
&\leq  \|\bs M - Q_h^c\bs M\|  + (C_\alpha\|\nabla \phi\|_\infty + C_1 + 1)\|\bs H - \bs H_h\|,
\end{align*}
which, together with Theorem \ref{lem:H}  and the standard error estimation   of the projection, gives the desired estimate  for $\bs M_h$.

Similarly, equation \eqref{eq:fem-5} implies that
$$
\bs \psi_h = Q_h^w(\beta(H_h)),
$$
where $Q_h^w:~L^2(\Omega) \rightarrow W_h$ is the $L^2$ orthogonal projection. Then by Lemma \ref{lem:alphaH} (ii) we obtain
\begin{align*}
\|\psi - \psi_h\| &\leq \| \psi - Q_h^w\psi\| + \|Q_h^w\big(\beta(H)  - \beta(H_H)\big)\|\\
 &\leq \| \psi - Q_h^w\psi\| + \|\big(\beta(H)  - \beta(H_h)\big)\|\\
 &\leq \| \psi - Q_h^w\psi\| + M_s\|H - H_h \|\\
 &\leq \| \psi - Q_h^w\psi\| + M_s\|\bs H - \bs H_h \|,
\end{align*}
 which, together with Theorem \ref{lem:H}  and   the projection property, yields the desired estimate \eqref{psi-h}.

Finally,   \eqref{eq:fem-6}  means that %for $p_h$ we have
$$\|p-p_h\|\leq \| \tilde p-\tilde p_h\| + \mu_{0} \|\psi-\psi_h\|, $$
which, together with  Theorem \ref{lem:up} and estimate  \eqref{psi-h}, indicates the desired result \eqref{p-h}.
%Therefore,
%$$
%\| p - p_{h}\| \leq \|\tilde p - \tilde p_{h}\| + \mu_{0}\|\psi - \psi_{h}\| \leq C h^{l+1}(\|\phi\|_{l+2} + \|\bs u\|_{l+2} + \|\tilde p\|_{l+1} + \|\psi\|_{l+1}).
%$$
This completes the proof.
\end{proof}

\section{Numerical experiments}
This section is devoted to three numerical examples to verify the performance of the mixed finite element methods. In all the examples, we solve  the nonlinear system \eqref{eq:fem-1} - \eqref{eq:fem-6}  by using  the {\em i}FEM package~\cite{Chen2009} and Algorithm \ref{alg}. 

\begin{alg}\label{alg}
Given $\phi_h^0\in S_h$ and $\bs u_h^0 \in \bs V_h$,  find $\phi_h \in S_h$, $\bs H_h \in \bs U_h$, $\bs M_h \in \bs U_h$, $\bs u_h \in \bs V_h$, $\tilde p_h \in W_h$, $\psi_h \in W_h$, and $p_h \in W_h$ through five steps:
\begin{enumerate}
\item[Step 1.] For $n = 1,2,\dots,L$ do 
\begin{enumerate}
  \item[(1)] Solving the nonlinear system \eqref{eq:fem-1} as: Find $\phi_h^n \in S_h$ such that
  $$
  a(\phi_h^{n-1};\phi_h^n,\tau_h) = -(g,\tau_h)\qquad\forall~\tau_h \in S_h.
  $$
  \item[(2)] Solving the nonlinear saddle point system \eqref{eq:fem-3}-\eqref{eq:fem-4} as: Find $\bs u_h^n \in \bs V_h$ and $\tilde p_h^n \in W_h$ such that
  \begin{align*}
 & \eta(\nabla\bs u_h^n,\nabla\bs v_h) + b(\bs u_h^{n-1};\bs u_h^n,\bs v_h)  - (\tilde p_h^n,\nabla\cdot\bs v_h) = (\bs f,\bs v_h) %\qquad\qquad\qquad
 & \forall~\bs v_h \in \bs V_h,\\
& (\nabla\cdot\bs u_h^n,q_h) = 0%\qquad\qquad\qquad\qquad\qquad\qquad\qquad\qquad\,\,\,
&\forall~q_h \in W_h.
\end{align*}
\end{enumerate}
\item[Step 2.] Let $\phi_h = \phi_h^L$, $\bs u_h = \bs u_h^L$ and $\tilde p_h = \tilde p_h^L$.
\item[Step 3.] Solving equation \eqref{eq:fem-2}   to get $\bs H_h \in \bs U_h$.
\item[Step 4.] Solving equations  \eqref{eq:weak-maxwell-3-dis}  and \eqref{eq:fem-5} to get $\bs M_h \in \bs U_h$ and   $\psi_h \in W_h$, respectively.
\item[Step 5.] Substituting  $\tilde p_h $ and $\psi_h$   into  \eqref{eq:fem-6} to get   $p_h\in W_h$.
\end{enumerate}
\end{alg}

\begin{remark}
From the convergence theory of   Newton-type methods~\cite{Gil2007,Suli2003}, we know that the iterative solution of Algorithm \ref{alg} will converge to the exact solution, provided that the iteration number $L$ is big enough and the initial guess is nearby the exact solution.  In fact, in all the subsequent numerical examples we choose the  initial guess $\phi_h^0$ as the corresponding  finite element solution of the   Poisson equation 
$$
\Delta \tilde \phi = g  \qquad\text{in }\ \Omega$$
with the same boundary condition as that of the exact solution $\phi$, and choose the initial guess  $\bs u_h^0$   as the  corresponding  finite element solution  of the Stokes equations
$$
\left\{
\begin{array}{ll}
-\eta\Delta\tilde{\bs u} + \nabla p = \bs f  & \text{in }\Omega, \\
\nabla\cdot \tilde{\bs u} = 0 & \text{in }\Omega % \\
%\tilde{\bs u}  =  \bs u|_{\partial\Omega} & \text{on }\partial\Omega.
\end{array}
\right.  
$$
with the same boundary condition as that of the exact solution $\bs u$. 
In so doing,   the choice  $L = 2$ works well in the algorithm.
\end{remark}

\begin{figure}[h!]
\centering
\includegraphics[width=0.70\textwidth]{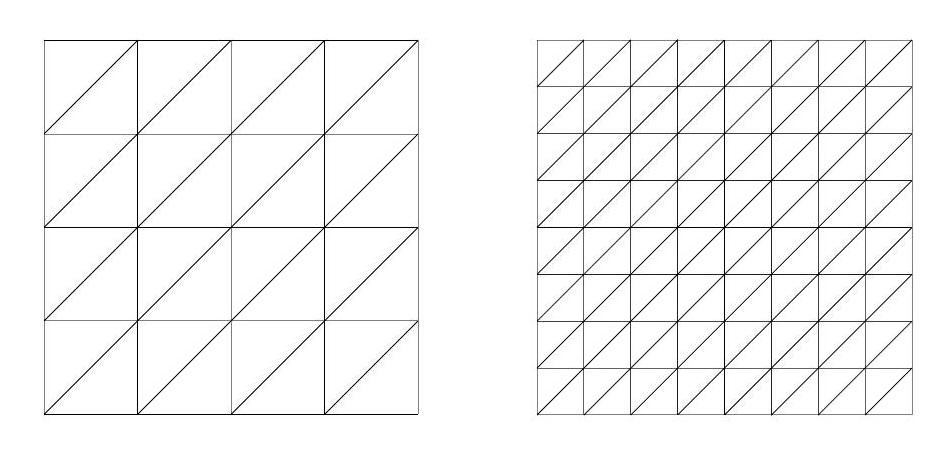}
\caption{The domain $\Omega = (0,1)^2$: $4\times 4$ (left) and $8 \times 8$ (right).}\label{fig:2Dmesh}
\end{figure}

\begin{example} %[ A 2D test]
\label{ex:2D}
This is a 2D test. We take $\Omega = (0,1)^2$, and use $N \times N$ uniform triangular mesh (cf. Figure \ref{fig:2Dmesh}) with $N = 4,~8,~16,~32,~64,~128$. 
The exact solution of the FHD model \eqref{eq:eq-sys} is given by
\begin{align*}
&\phi(x,y) = (x^2-x)(y^2-y),\\
&\bs H(x,y) = ((2x-1)(y^2-y),(x^2-x)(2y-1))^\intercal,\\
&\bs M(x,y) = M_s\left(\coth(\gamma H) - \frac{1}{\gamma H} \right)\frac{\bs H}{H}, \\
& \bs u(x,y) = (\sin(\pi y),\sin(\pi x))^\intercal,\\
&p(x,y) = 60x^2y - 20y^3 - 5,
\end{align*}
with the parameters $M_s$, $\rho$, $\eta$, $\mu_0$ and $\gamma$ all being   chosen as $1$.

We use the conforming linear ($\mathbb P_1$) element   to discretize the variable $\phi$, the lowest order edge element $NE_0$ to discretize the variables $\bs H$ and $\bs M$, the $CR$ (nonconforming-$\mathbb P_1$) element   to discretize the variables $\bs u$, and the piecewise constant  ($\mathbb P_0$) element   to discretize the variables   $\tilde p$, $\psi$ and $p$.  Note that such a combination of finite element spaces corresponds to $l = 0$, then 
we easily    see from Theorems \ref{lem:sigma-phi} - \ref{lem:up} and Corollary \ref{lem:M} that the theoretical accuracy of the scheme is $\mathcal O(h)$. 
Numerical results are listed in Table \ref{tab:2Dex-1}.
 
\begin{table}[h!]
\footnotesize
\begin{center}
\caption{Numerical results  for Example \ref{ex:2D}.}
\label{tab:2Dex-1}
\begin{tabular}{c|c|c|c|c|c|c}
\hline
\hline  $N$ & $\frac{ \|\nabla(\phi - \phi_{h})\|}{\|\nabla\phi\|} $ &$\frac{ \|\bs H - \bs H_{h}\|_c}{\|\bs H\|_{c}}$ & $\frac{\|\bs M - \bs M_{h}\|}{\|\bs M\|}$ & $\frac{\|\bs u - \bs u_{h}\|_{1,h}}{\|\bs u\|_{1}}$  & $\frac{\|p - p_h\|}{\|p\|}$ & $\|\curl\bs H_h\|_\infty$\\
\hline
$4$   &  $0.3943$  &  $0.3943$  &  $0.3941$  &  $0.7424$ &    $0.3083$ & $0.0003e-12$\\ 
\hline
$8$  &  $0.2023$   &  $0.2023$  &  $0.2023$  &  $0.4385$ &   $0.1524$ & $0.0013e-12$ \\ 
\hline
$16$ &  $0.1018$    &  $0.1018$   &  $0.1018$    &  $0.2352$ &   $0.0717$ & $ 0.0083e-12$\\
\hline
$32$ & $0.0510$   &  $0.0510$   &  $0.0510$    &  $0.1207$ &   $0.0342$ & $ 0.0315e-12$\\
\hline
$64$ & $0.0255$   &  $0.0255$  &  $0.0255$   &  $0.0609$ &    $0.0167$ & $ 0.1485e-12$\\
\hline
$128$& $0.0128$   &  $0.0128$   &  $0.0128$    &  $0.0305$ &    $0.0083$ &  $0.6481e-12 $\\
\hline
order   &  $0.9900$ &  $0.9900$  &  $0.9899$   & $0.9207$ &    $1.0439$ & --\\  
\hline
\hline
\end{tabular}
\end{center}
\end{table}

%\begin{table}[h!]
%\footnotesize
%\begin{center}
%\caption{The values of $\|\curl \bs H_h\|_\infty$ on different meshes.}
%\label{tab:2Dex-2}
%\begin{tabular}{c|c|c|c|c|c|c}
%\hline
%\hline  $N$ & $4$ & $8$ &$16$ & $32$ & $64$ &$128$\\
%\hline
%$\|\curl\bs H_h\|_\infty$ & $0.0003(-12)$ & 
%    $0.0019(-12)$ &
%    $0.0075(-12)$ &
%    $0.0311(-12)$ &
%    $0.1439(-12)$ &
%    $0.6235(-12)$\\ 
%\hline
%\hline
%\end{tabular}
%\end{center}
%\end{table}
\end{example}

\begin{figure}[!h]
\centering
\includegraphics[width=0.70\textwidth]{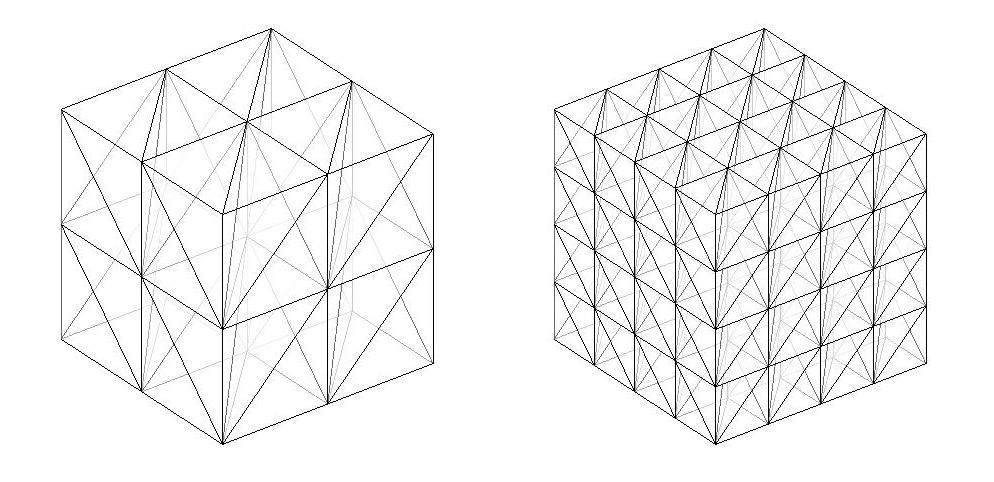}
\caption{The domain $\Omega = (0,1)^3$: $2\times 2\times 2$ (left) and $4\times 4\times 4$ (right).}\label{fig:mesh}
\end{figure}

 The other two experiments, Examples \ref{ex-1} and \ref{ex-2}, are   3D tests, with $\Omega = (0,1)^3$ and use $N \times N \times N$ uniform tetrahedral meshes (cf. Figure \ref{fig:mesh}) with $N = 4,~8,~16$.  
\begin{example} [The lowest order   approximation] 
%We take $\Omega = (0,1)^3$ and use $N \times N \times N$ uniform tetrahedral meshes (cf. Figure \ref{fig:mesh}) with $N = 4,~8,~16$.
\label{ex-1}
In this 3D test, the exact solution of the FHD model \eqref{eq:eq-sys} is given by
\begin{align*}
& \phi  = \sin(\pi x)\sin(\pi y)\sin(\pi z), \\ %\,\qquad\qquad
&\bs H  = \pi\begin{pmatrix}
\cos(\pi x)\sin(\pi y)\sin(\pi z)\\
\sin(\pi x)\cos(\pi y)\sin(\pi z) \\
\sin(\pi x)\sin(\pi y)\cos(\pi z)
\end{pmatrix}, \\
& \bs M  = M_{s}\left(\coth(\gamma H)- \frac{1}{\gamma H} \right)\frac{\bs H}{H},\\ %\qquad
&\bs u  = \begin{pmatrix}
\sin(\pi y), & \sin(\pi z) , & \sin(\pi x)
\end{pmatrix}^\intercal,\\
& p = 120x^2yz-40y^3z-40yz^3,
\end{align*}
with the parameters $M_s$, $\gamma$, $\rho$ and $\eta$ all being choosen as $1$ and $\mu_0 = 10$. 

We use the the conforming $\mathbb P_1$-element to discretize the variable $\phi$, the lowest order N\'ed\'elec finite element space $NE_0$ \cite{Nedelec1980,Nedelec1986} to discretize the variables $\bs H$ and $\bs M$, the nonconforming $CR$ element \cite{Crouzeix1973CRele} to discretize the variable $\bs u$, and  the piecewise constant  $\mathbb P_0$ element to discretize the variables $\tilde p$, $\psi$ and $p$. With these settings,   from Theorems \ref{lem:sigma-phi} - \ref{lem:up} and Corollary \ref{lem:M}  we see that the theoretical accuracy of the scheme is $\mathcal O(h)$.  Numerical results are given in Table \ref{tab:ex-1}.
%For the Stokes pair $\bs V_h$ and $W_h$, we choose $\bs V_h$ as the $CR$  element space  and $W_h$ as . Such a combination of finite element spaces corresponding to $l=0$, and the theoretical accuracy of the scheme is $\mathcal O(h)$. 
%The numerical results are shown in Tables \ref{tab:ex-1} and \ref{tab:ex-1-2}.
\begin{table}[h!]
\footnotesize
\begin{center}
\caption{Numerical results  for Example \ref{ex-1}.}
\label{tab:ex-1}
\begin{tabular}{c|c|c|c|c|c|c}
\hline
\hline  $N$ & $\frac{ \|\nabla(\phi - \phi_{h})\|}{\|\nabla\phi\|} $ &$\frac{ \|\bs H - \bs H_{h}\|_c}{\|\bs H\|_{c}}$ & $\frac{\|\bs M - \bs M_{h}\|}{\|\bs M\|}$ & $\frac{\|\bs u - \bs u_{h}\|_{1,h}}{\|\bs u\|_{1}}$  & $\frac{\|p - p_h\|}{\|p\|}$ & $\|\curl\bs H_h\|_\infty$\\
\hline
$4$   &  $0.4739$  &  $0.4739$   &  $0.4755$    &    $0.2066$   &  $0.3665$ & $0.0213e-12$\\ 
\hline
$8$  &  $0.2491$  &  $0.2491 $   &  $0.2506$    &    $0.1082$   &  $0.1803$ & $0.1048e-12$\\ 
\hline
$16$ &  $0.1262$ &  $0.1262$   &   $0.1271$   & 
$0.0551$ & $0.0835$ & $0.6821e-12$\\
\hline
order   &  $0.9545$ &  $0.9545$     &   $0.9515$   &  $0.9529$  &   $1.0671$ & $--$ \\  
\hline
\hline
\end{tabular}
\end{center}
\end{table}

%\begin{table}[h!]
%\footnotesize
%\begin{center}
%\caption{The values of $\|\curl \bs H_h\|$ on different meshes.}
%\label{tab:ex-1-2}
%\begin{tabular}{c|c|c|c|c}
%\hline
%\hline  $h$ & $1/2$ & $1/4$ &$1/8$ & $1/16$\\
%\hline
%$\|\curl\bs H_h\|$ & $0.0060e-07$  &  $0.0228e-07$  &  $0.0753e-07$   &  $0.1553e-07$\\ 
%\hline
%\hline
%\end{tabular}
%\end{center}
%\end{table}

\end{example}

\begin{example}
[A higher order approximation]
\label{ex-2}
In this 3D test, the exact solution of the FHD model \eqref{eq:eq-sys} is given by
\begin{align*}
& \phi(x,y,z)  = (x^2+y^2+z^2)(x^2-x)(y^2-y)(z^2-z), \\
& \bs H (x,y,z) = \begin{pmatrix}
2x(x^2-x)(y^2-y)(z^2-z)+(2x-1)(x^2+y^2+z^2)(y^2-y)(z^2-z) \\
2y(x^2-x)(y^2-y)(z^2-z)+(2y-1)(x^2+y^2+z^2)(x^2-x)(z^2-z) \\
2z(x^2-x)(y^2-y)(z^2-z)+(2z-1)(x^2+y^2+z^2)(x^2-x)(y^2-y)
\end{pmatrix}, \\
& \bs M(x,y,z)  = M_{s}\left(\coth(\gamma H)- \frac{1}{\gamma H} \right)\frac{\bs H}{H},\\
& \bs u(x,y,z)  = \begin{pmatrix}
\sin(\pi y), & \sin(\pi z) , & \sin(\pi x)
\end{pmatrix}^\intercal,\\
& p(x,y,z) = 120x^2yz-40y^3z-40yz^3,
\end{align*}
where the parameters $M_s$, $\gamma$, $\rho$, $\eta$ and $\mu_0$ are all choosen as $1$. 

We use the conforming quadratic  ($\mathbb P_2$) element   to discretize the variable $\phi$, the first order N\'ed\'elec edge element   $NE_1$ \cite{Nedelec1980,Nedelec1986} to discretize the variables $\bs H$ and $\bs M$, the  Taylor-Hood $\mathbb P_2$-$\mathbb P_1$ element to discretize the variables $\bs u$ and $\tilde p$, and the continuous   linear    ($\mathbb P_1$) element to discretize the variables $\psi$ and $p$.  With these settings, we   see %from Theorems \ref{lem:sigma-phi} - \ref{lem:up} and Corollary \ref{lem:M} 
that the theoretical accuracy of the scheme is $\mathcal O(h^2)$. 
%In this example, we choose $S_h$ as the continuous piecewise quadratic polynomials $\mathbb P_2$ and $U_h$ as . The Stokes pair $\bs V_h$ and $W_h$ are choose as continuous piecewise quadratic polynomials $\mathbb P_2$ and continuous piecewise linear polynomials $\mathbb P_1$. Such a combination of finite element spaces corresponds to $l = 1$, and the theoretical accuracy of the finite element scheme is $\mathcal O(h^2)$. 
We list   numerical results in Table \ref{tab:ex-2}.
% and \ref{tab:ex-2-2}.   
\begin{table}[h!]
\footnotesize
\begin{center}
\caption{Numerical results  for Example \ref{ex-2}.}
\label{tab:ex-2}
\begin{tabular}{c|c|c|c|c|c|c}
\hline
\hline  $N$ & $ \frac{\|\nabla(\phi - \phi_{h})\|}{\|\nabla\phi\|} $ &$\frac{ \|\bs H - \bs H_{h}\|_c}{\|\bs H\|_{c}}$ & $\frac{\|\bs M - \bs M_{h}\|}{\|\bs M\|}$ & $\frac{\|\nabla(\bs u - \bs u_{h})\|}{\|\nabla\bs u\|}$  & $\frac{\|p - p_h\|}{\|p\|}$ & $\|\curl\bs H_h\|$\\
\hline
$4$   &  $0.1369$  &  $0.1369$   &  $0.1369$    &    $0.0450$   &  $0.0535$ & $0.2970e-08$\\ 
\hline
$8$  &  $0.0372$  &  $0.0372$   &  $0.0372$    &    $0.0081$   &  $0.0135$ & $0.7640e-08$\\ 
\hline
$16$ &  $0.0095$ &  $0.0095$   &   $0.0095$   & 
$0.0016$ & $ 0.0034$ & $1.6060e-08$\\
\hline
order   &  $1.9245$ &  $1.9245$     &   $1.9245$   &  $2.4069$  &   $1.9880$ & $--$\\  
\hline
\hline
\end{tabular}
\end{center}
\end{table}

%\begin{table}[h!]
%\footnotesize
%\begin{center}
%\caption{The values of $\|\curl \bs H_h\|$ on different meshes.}
%\label{tab:ex-2-2}
%\begin{tabular}{c|c|c|c|c}
%\hline
%\hline  $h$ & $1/2$ & $1/4$ &$1/8$ & $1/16$\\
%\hline
%$\|\curl\bs H_h\|$ & $0.0371e-08$  &  $0.3011e-08$  &  $0.7671e-08$   &  $1.5885e-08$\\ 
%\hline
%\hline
%\end{tabular}
%\end{center}
%\end{table}
\end{example}

From Examples \ref{ex:2D}, \ref{ex-1} and \ref{ex-2}, we have the following observations:
\begin{itemize}
\item From Tables \ref{tab:2Dex-1} and \ref{tab:ex-1}, we see that the errors in $H^1$ semi-norm for $\phi$, $\bs H(\curl)$ norm for $\bs H$, $L^2$ norm for $\bs M$ and $p$, and discrete $H^1$ norm for $\bs u$, all have the first (optimal) order rate.
\item From Table \ref{ex-2}, we see that the errors in $H^1$ semi-norm for $\phi$ and $\bs u$, $\bs H(\curl)$ norm for $\bs H$, $L^2$ norm for $\bs M$ and $p$, all have the second (optimal) order rate.
\item  From Tables \ref{tab:2Dex-1} - \ref{tab:ex-2}, we can see that the numerical scheme preserves $\curl\bs H_h = 0$ exactly.
\end{itemize}

%
% find that all the variables get the optimal convergence order. And from Tables \ref{tab:ex-1-2} and \ref{tab:ex-2-2}, we can conclude that the finite element methods preserve the equation $\curl\bs H_h = 0$ exactly.

%\begin{thebibliography}{1}
%\end{thebibliography}

%\bibliography{references}

%\bibliographystyle{abbrv}
%\bibliography{/Users/wyk/uestcwork/mypapers/FHD/FHD}

%\bibliographystyle{abbrv}
% \bibliography{/Dropbox/Math/biblib/library}

\end{document}